\documentclass[11pt,reqno]{amsart}

\usepackage{amsfonts, amsthm, amsmath, amssymb}
\allowdisplaybreaks[4]

\usepackage{tikz} 

\usepackage{float} 
\usepackage{multirow} 
\usepackage{supertabular} 

\makeatletter

\newcommand{\Rmnum}[1]{\expandafter\@slowromancap\romannumeral #1@}
\makeatother

\usepackage{enumerate} 
\usepackage[inline]{enumitem} 

\usepackage[pagebackref]{hyperref} 
\hypersetup{colorlinks=true} 

\usepackage{mathabx}

\numberwithin{equation}{section}
\topmargin 0.8in
\textheight=8.2in
\textwidth=6.4in
\voffset=-.68in
\hoffset=-.68in

\theoremstyle{plain}

\newtheorem{theorem}{Theorem}[section]
\newtheorem{lemma}[theorem]{Lemma}
\newtheorem{corollary}[theorem]{Corollary}

\newtheorem{proposition}[theorem]{Proposition}

\theoremstyle{definition}
\newtheorem{Def}[theorem]{Definition}

\newtheorem{remark}[theorem]{Remark}
\newtheorem{?}[theorem]{Problem}


\renewcommand{\S}{\mathfrak{S}}

\def\des{\mathrm{des}}
\def\std{\mathrm{std}}
\def\st{\mathrm{st}}
\def\inv{\mathrm{inv}}
\def\blk{\mathrm{blk}}
\def\sing{\mathrm{sing}}
\def\Par{\mathrm{Par}}

\begin{document}

\title[Consecutive and quasi-consecutive patterns]{Consecutive and quasi-consecutive patterns: $\mathrm{des}$-Wilf classifications and generating functions}

\author[Y. Wang, Q.Fang, S. Fu, S. Kitaev, H. Li]{Yan Wang, Qi Fang, Shishuo Fu, Sergey Kitaev, Haijun Li}
\address[Yan Wang]{College of Mathematics and Statistics, Chongqing University, Chongqing 401331, P.R. China}
\email{yanw2143@foxmail.com}
\address[Qi Fang]{College of Mathematics and Statistics, Chongqing University, Chongqing 401331, P.R. China}
\email{qifangpapers@163.com}
\address[Shishuo Fu]{College of Mathematics and Statistics, Chongqing University, Chongqing 401331, P.R. China}
\email{fsshuo@cqu.edu.cn}
\address[Sergey Kitaev]{Department of Mathematics and Statistics, University of Strathclyde, 26 Richmond Street, Glasgow G1 1XH, UK}
\email{sergey.kitaev@strath.ac.uk}
\address[Haijun Li]{College of Mathematics and Statistics, Chongqing University, Chongqing 401331, P.R. China}
\email{lihaijun@cqu.edu.cn}

\date{\today}

\begin{abstract}
Motivated by a correlation between the distribution of descents over permutations that avoid a consecutive pattern and those avoiding the respective quasi-consecutive pattern, as established in this paper, we obtain a complete $\des$-Wilf classification for quasi-consecutive patterns of length up to 4. For equivalence classes containing more than one pattern, we construct various descent-preserving bijections to establish the equivalences, which lead to the provision of proper versions of two incomplete bijective arguments previously published in the literature. Additionally, for two singleton classes, we derive explicit bivariate generating functions using the generalized run theorem.
\end{abstract}

\maketitle

\section{Introduction}

Permutation patterns are a very popular research topic. The introduction of this area is traditionally attributed to 
Donald Knuth, particularly to exercises on pages 242--243 of the first volume of ``The Art of Computer Programming''~\cite{knu75}. However, the first
systematic study of pattern avoidance was not conducted until the seminal paper by Simion and Schmidt~\cite{SS85}; see \cite{kit11} and the references therein for more information on the topic. 

Of interest to us is finding the distributions of permutation statistics over pattern-avoiding classes of permutations, which have attracted much attention in the literature; for example, see \cite{HK} for a recent paper in this direction, along with references therein. More specifically, we are interested in the widely studied distributions of descents in pattern-avoiding permutations, thereby extending the studies in \cite{BDGZ19, Bukata2019, HK, MR06} to the case of avoidance of consecutive and quasi-consecutive patterns (see below for definitions). The study of classical statistics such as descents and excedances over the symmetric group $\S_n$ dates back to MacMahon~\cite{mac60}, and their pivotal roles in the combinatorial interpretations of Eulerian polynomials were revealed by Riordan~\cite{rio58}; see the monograph~\cite{FS70} for a comprehensive treatment. In general, the study of descents has become central to algebraic combinatorics for understanding symmetric functions, representation theory~\cite{sag91}, and Coxeter groups~\cite{rei95}.

The \textit{standardization} of a permutation $\pi$ on a set $\{j_1,j_2,\ldots,j_r\}$, where
$j_1<j_2<\cdots<j_r$, is the permutation $\pi'$ obtained by replacing $j_i$ by $i$, for $i=1,2,\ldots,r$. We denote $\mathrm{std}(\pi)$ the standardization of $\pi$. We say a permutation $\pi \in \S_n$ \textit{contains} the pattern $\omega \in \S_k$ if there exists a subsequences of $\pi$ whose standardization is exactly $\omega$, otherwise we say $\pi$ \textit{avoids} the pattern $\omega$.
The set of all permutations of $\S_n$ avoiding $\omega$ is denoted by $\S_n(\omega)$. For the most part of this paper, we use the one-line notation $\pi=\pi_1\pi_2\cdots \pi_n$ to represent a permutation $\pi$, where $\pi_i$ is the image of $i$ under $\pi$ for every $i\in[n]:=\{1,2,\ldots,n\}$.

A \textit{vincular} pattern $\omega $ of length $k$ is a permutation in ${\S}_k$, some of whose consecutive letters may be underlined.
If $\omega $ contains $\underline{\omega_i\omega_{i+1}\cdots \omega_{j}}$, then the letters corresponding to $\omega_i,\omega _{i+1},\ldots,\omega_{j}$ 
in an occurrence of $\omega $ in a permutation must be adjacent as well, whereas there is no adjacency condition for non-underlined consecutive letters.
For example, the pattern $\underline{12}3\underline{45}$ occurs in the permutation $243568179$ four times,
as the subsequences $24568$, $24579$, $24679$, and $35679$.
Patterns in which all letters are underlined are called \textit{consecutive} patterns; see Elizalde's survey~\cite{eliz16} and the references therein for further information on consecutive patterns. Unless otherwise noted, we reserve the greek letter $\sigma$ for a consecutive pattern, 
i.e., $\sigma=\underline{\sigma_1\sigma_2\cdots\sigma_k}$ if the length of $\sigma$ is $k$. Besides consecutive patterns, in this paper we are also concerned with vincular patterns of the form $\sigma j$, where $\std(\sigma)$ is a consecutive pattern of length $k-1$ and $1\le j\le k$. Following \cite{CN18}, such a pattern is called a \textit{quasi-consecutive} pattern.

A function $\st:\S_n \to \mathbb{N} $ is clalled a \textit{permutation statistic}, and the systematic study of permutation
satistics dates back to MacMahon~\cite{mac60}. We are concerned with two classic statistics, the descent number $\des$ and the inversion number $\inv$, whose definitions are recalled below.
\begin{align*} 
\des(\pi) &= |\{i \in [n-1]:\pi_i>\pi_{i+1}\}|,\\
\inv(\pi) &= |\{(i,j)\in [n]^2: i<j \text{ and } \pi_i>\pi_j\}|.
\end{align*}

If $\omega$, $\omega '$ are two patterns and $|\S_n(\omega )|=|\S_n(\omega ')|$ for all $n\ge 1$, then we say $\omega $ and $\omega '$ are \textit{Wilf-equivalent} and write $\omega  \sim \omega '$.
Moreover, for a given permutation statistic ``st", if the following equinumerosity holds for all $n\ge 1$ and $k\ge 0$, 
$$|\{\pi \in \S_n(\omega ): \st(\pi)=k\}|=|\{\pi \in \S_n(\omega '): \st(\pi)=k\}|,$$ 
then we say that $\omega $ and $\omega '$ are \textit{$\st$-Wilf-equivalent} and write $\omega  \overset{\st}{\sim} \omega '$. According to these definitions, one sees that $\omega $ and $\omega '$ may be Wilf-equivalent without being $\st$-Wilf-equivalent. For example, $123$ and $321$ are not $\des$-Wilf-equivalent since $\des(123)=0$ and $\des(321)=2$, even though they are Wilf-equivalent, and both $\S_n(123)$ and $\S_n(321)$ are enumerated by the Catalan numbers, a classic result attributed to MacMahon~\cite{mac60} and Knuth~\cite{knu75}.

Let $A_n^{\omega}(t)=\sum_{\pi \in \S_n(\omega)} t^{\des(\pi)}$ be the generating function of the $\omega$-avoiding permutations $\pi$ of length $n\geq 0$, where $t$ keeps track of the number of descents in $\pi$, and we define
\begin{align*}
A^{\omega}(x,t) &:= \sum_{n\ge 0} A_n^{\omega}(t)\dfrac{x^n}{n!},\\
B^{\omega}(x,t) &:= \sum_{n\ge 0} A_n^{\omega}(t) x^n.
\end{align*} 
Note that $A_0^{\omega}(t)=1$ as we view the empty word as the only permutation of length $0$.

The \textit{reverse} of a permutation $\pi=\pi_1\pi_2\cdots \pi_n$ is the permutation $\pi^{r}=\pi_n\pi_{n-1}\cdots\pi_1$.
The \textit{complement} $\pi^{c}$ of $\pi$ is the permutation $\pi_1'\pi_2'\cdots\pi_n'$ where $\pi_i'=n+1-\pi_i$. By \textit{inverse} we mean the regular group theoretical inverse on permutations, that is, the $\pi_{i}$-th position of the inverse $\pi^{-1}$ in its one-line notation is occupied by $i$. The correspondences between $\pi$ and its reverse $\pi^r$, complement $\pi^c$, inverse $\pi^{-1}$ are called the trivial bijections (actually involutions) from $\S_n$ to itself,
denoted by $\phi _r$, $\phi_c$, and $\phi_{-1}$, respectively.
Also, note that $\phi_c$ and $\phi_r$ are commutative, i.e., $\phi_c\circ\phi_r=\phi_r\circ\phi_c $.
For convenience, we write $\pi^{rc}:=\phi_c(\phi_r(\pi))$.

Let $\sigma$ be a consecutive pattern on $\{1,2,\ldots,k-1\}$ and let the pattern $p=\sigma k$,
so that $p$ is a quasi-consecutive pattern ending with the largest element. For example, if $\sigma=\underline{231} $ then $p=\underline{231}4$. The following theorem links the Wilf-equivalences involving $\sigma$ with those involving $p$, and it was derived by both Elizalde~\cite{eli06} and Kitaev~\cite{kit05}. 
\begin{theorem}\label{thm:eli-kit}
Suppose $\sigma\sim\sigma'$ are two Wilf-equivalent consecutive patterns of length $k-1$, then
$$\sigma k\sim\sigma'k.$$
\end{theorem}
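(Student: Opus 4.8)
The plan is to derive a recurrence for $|\S_n(\sigma k)|$ whose only pattern-dependent ingredient is the sequence $\big(|\S_m(\sigma)|\big)_{m\ge 0}$ of consecutive-$\sigma$-avoidance numbers. Once such a recurrence is available, the conclusion $\sigma k\sim\sigma' k$ is immediate: the hypothesis $\sigma\sim\sigma'$ says $|\S_m(\sigma)|=|\S_m(\sigma')|$ for all $m$, so a straightforward induction on $n$ yields $|\S_n(\sigma k)|=|\S_n(\sigma' k)|$.

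The structural heart of the argument is to track the position of the largest entry. Write $\pi\in\S_n$ as $\pi=\alpha\, n\,\beta$ with $\alpha=\pi_1\cdots\pi_{p-1}$, $\beta=\pi_{p+1}\cdots\pi_n$, and $\pi_p=n$. I would prove that $\pi$ avoids $\sigma k$ if and only if (i) $\alpha$ avoids the consecutive pattern $\sigma$ and (ii) $\std(\beta)$ avoids $\sigma k$. An occurrence of $\sigma k$ in $\pi$ consists of a window of $k-1$ consecutive positions forming $\sigma$ (the underlined part) followed by a strictly larger entry. Such a window of consecutive positions is an interval, so it lies entirely inside $\alpha$, entirely inside $\beta$, or contains position $p$. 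A window containing $p$ is impossible, since $n=\pi_p$ would then have to be exceeded by a later entry. A window inside $\alpha$ forming $\sigma$ immediately yields an occurrence of $\sigma k$ in $\pi$, the global maximum $n$ at position $p$ playing the role of the required larger final entry; hence (i) is forced. A window inside $\beta$, together with its larger final entry (which also lies in $\beta$), is exactly an occurrence of $\sigma k$ in $\std(\beta)$; hence (ii) is forced. Conversely, if both (i) and (ii) hold, the same trichotomy shows no occurrence of $\sigma k$ can exist in $\pi$. Here one uses that avoidance of a consecutive pattern and of a vincular pattern is invariant under standardization.

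With the decomposition in hand, I would sum over the position $p$ of $n$ and over the choice of which $p-1$ of the remaining $n-1$ values occupy $\alpha$ (the rest occupying $\beta$); on any fixed $m$-element set the number of arrangements avoiding consecutive $\sigma$ is $|\S_m(\sigma)|$ and the number avoiding $\sigma k$ is $|\S_m(\sigma k)|$. Setting $j=p-1$, this gives, for $n\ge 1$,
\begin{equation*}
|\S_n(\sigma k)|=\sum_{j=0}^{n-1}\binom{n-1}{j}\,|\S_j(\sigma)|\cdot|\S_{n-1-j}(\sigma k)|,
\end{equation*}
with $|\S_0(\sigma k)|=1$. Equivalently, if $C(x)=\sum_{m\ge 0}|\S_m(\sigma)|\,x^m/m!$ then $A^{\sigma k}(x,1)=\exp\!\big(\int_0^x C(s)\,ds\big)$, which makes the dependence on $\sigma$ solely through $C$ completely transparent. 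Since $\sigma\sim\sigma'$ forces $C=C'$, we get $A^{\sigma k}(x,1)=A^{\sigma' k}(x,1)$, i.e.\ $\sigma k\sim\sigma' k$.

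The only genuinely delicate step is the ``only if'' direction of the decomposition: one must check that the three possible window positions are exhaustive and that the ``window inside $\alpha$'' and ``window containing $p$'' cases behave exactly as claimed (in particular that the global maximum always serves, or blocks, as asserted). The remaining steps — setting up the binomial sum and running the induction (or equivalently solving the first-order differential equation for the exponential generating function) — are routine bookkeeping.
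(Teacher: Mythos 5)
Your argument is correct, and it is essentially the same approach the paper uses: the decomposition $\pi=\alpha\,n\,\beta$ with ``$\pi$ avoids $\sigma k$ iff $\alpha$ avoids $\sigma$ and $\beta$ avoids $\sigma k$'' is exactly the key observation in the paper's proof of the Structure Theorem (Theorem~\ref{structure thm}), of which Theorem~\ref{thm:eli-kit} is the $t=1$ specialization. Your recurrence and the formula $A^{\sigma k}(x,1)=\exp\bigl(\int_0^x A^{\sigma}(s,1)\,ds\bigr)$ are precisely equation \eqref{eqn-1} and identity \eqref{id:struc} with $t=1$.
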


It turns out that a more general result is true, since the descent distribution over $\S_n(\sigma)$ is directly related to the descent distribution over $\S_n(p)$ in the sense of the following theorem. This is the first main result of the paper and we refer to it as the ``Structure Theorem''.

\begin{theorem}[Structure Theorem]\label{structure thm}
    For any consecutive pattern $\sigma$ defining the quasi-consecutive pattern $p=\sigma k$, we have 
    \begin{align}
    \label{id:struc}
    A^p(x,t) = \frac{1}{t} (e^{t \int_{0}^{x} A^{\sigma}(y,t) \,dy}-1) +1.
    \end{align}
\end{theorem}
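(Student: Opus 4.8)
The plan is to establish a recursive decomposition of $\S_n(p)$ according to the position of the largest entry, convert it into a first‑order differential equation for $A^p(x,t)$, and integrate. Given $\pi\in\S_n$, write $\pi=\alpha\,n\,\beta$, where $\alpha$ (resp. $\beta$) is the possibly empty word formed by the entries of $\pi$ preceding (resp. following) $n$. The combinatorial heart of the argument is the claim that $\pi\in\S_n(p)$ if and only if $\std(\alpha)$ avoids the consecutive pattern $\sigma$ and $\std(\beta)\in\S(p)$. Recall that an occurrence of $p=\sigma k$ in a word is a factor of $k-1$ consecutive entries whose standardization is $\sigma$, followed (not necessarily immediately) by an entry exceeding all entries of that factor. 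To prove the claim, I would take such a factor $W$ inside $\pi$ and locate it relative to the position of $n$: (i) if $W$ lies entirely within $\alpha$, then $n$ lies to its right and exceeds all its entries, so $W$ already yields an occurrence of $p$ in $\pi$, hence $\alpha$ must contain no such factor; (ii) if $W$ contains the position of $n$, then the maximum of $W$ equals $n$, so no later entry can exceed it and $W$ is harmless; (iii) if $W$ lies entirely within $\beta$, any entry completing it to an occurrence of $p$ also lies in $\beta$, so the factors inside $\beta$ produce an occurrence of $p$ in $\pi$ exactly when $\std(\beta)$ contains $p$. Since standardization preserves relative order, the equivalence follows.

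Next I would record the effect on descents. Reading $\alpha$, then $n$, then $\beta$ in a row, the step from $\alpha$ into $n$ is an ascent (or is absent, when $\alpha$ is empty) and the step from $n$ into $\beta$ is a descent when $\beta\neq\varnothing$ (and is absent otherwise), while internal descents are counted by $\des(\alpha)$ and $\des(\beta)$. Therefore $\des(\pi)=\des(\alpha)+\des(\beta)+[\beta\neq\varnothing]$. Summing $t^{\des(\pi)}$ over $\pi\in\S_n(p)$ and grouping the terms according to the size $a$ of $\alpha$ — whose entry set is then an arbitrary $a$-subset of $[n-1]$, chosen in $\binom{n-1}{a}$ ways — gives, for $n\geq1$,
\begin{equation*}
A_n^p(t)=A_{n-1}^\sigma(t)+t\sum_{a=0}^{n-2}\binom{n-1}{a}A_a^\sigma(t)\,A_{n-1-a}^p(t),
\end{equation*}
where the isolated term is the contribution of $\beta=\varnothing$ and the sum collects the cases $\beta\neq\varnothing$.

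Finally I would pass to exponential generating functions. Since a binomial convolution of two sequences, shifted by one index, has EGF equal to the integral of the product of the corresponding EGFs, the recurrence turns into
\begin{equation*}
A^p(x,t)-1=(1-t)\int_0^x A^\sigma(y,t)\,dy+t\int_0^x A^\sigma(y,t)\,A^p(y,t)\,dy.
\end{equation*}
Differentiating in $x$ yields $\partial_x A^p=A^\sigma\bigl(1-t+t\,A^p\bigr)$ with $A^p(0,t)=1$. Substituting $H:=1-t+t\,A^p$ converts this into the separable equation $\partial_x H=t\,A^\sigma H$ with $H(0)=1$, whence $H=\exp\!\bigl(t\int_0^x A^\sigma(y,t)\,dy\bigr)$; solving $A^p=(H-1+t)/t$ then gives precisely \eqref{id:struc}.

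I expect the main obstacle to be the rigorous verification of the decomposition in the first step, in particular making the three‑case analysis airtight at the boundaries: when the factor $W$ ends exactly at the position of $n$, and when $\alpha$ or $\beta$ is empty. Once the recurrence is secured, the generating‑function manipulations and the first‑order ODE are routine.
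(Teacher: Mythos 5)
Your proposal is correct and follows essentially the same route as the paper: decomposing $\pi$ around its maximum entry, deriving the recurrence $A_{n+1}^p(t)=t\sum_{i=0}^{n-1}\binom{n}{i}A_i^{\sigma}(t)A_{n-i}^p(t)+A_n^{\sigma}(t)$, and solving the resulting first-order ODE via the substitution $H=1-t+tA^p$ (the paper's $\widetilde{A^p}=A^p-1+\frac{1}{t}$ is just $H/t$). Your three-case verification of the key combinatorial equivalence is in fact more detailed than the paper's one-line ``key observation.''
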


The rest of this paper is organized as follows. Section~\ref{sec:structure theorem} is devoted to a proof of Theorem~\ref{structure thm}, which is then utilized to compute generating functions and complete the classification for quasi-consecutive patterns of length 3. A similar $\des$-Wilf classification for quasi-consecutive patterns of length 4 is presented in Section~\ref{sec:classifications of 4}. Then, in Section~\ref{sec:g.f. of 4}, two of the generating functions for length 4 quasi-consecutive patterns are derived explicitly via the ``generalized run theorem''. Finally, in Section~\ref{conclusion} we provide concluding remarks.

\section{Structure Theorem and classification of length-3 quasi-consecutive patterns}\label{sec:structure theorem}

\subsection{Proof of Theorem~\ref{structure thm}}
We begin with a proof of Theorem~\ref{structure thm}, and then collect a few preliminary results for later use.

\begin{proof}[Proof of Theorem~\ref{structure thm}]
    Given a permutation $\pi\in \S_{n+1}(p)$, suppose it has the decomposition $\pi=\pi'(n+1)\pi''$ as shown in Fig.~\ref{fig:decomposition of pi(p)}, where the black dot represents the element $n+1$ in $\pi$, while the left square labeled $A^{\sigma}$ refers to the permutation $\pi'$ of
    length $i\,(0\le i\le n)$, and the right square labeled $A^p$ refers to the permutation $\pi''$. The key observation is that $\pi$ is $p$-avoiding if and only if $\pi'$ is $\sigma$-avoiding and $\pi''$ is $p$-avoiding. Consequently, by considering all possible $i$, we have, for $n\ge 0$,
    \begin{equation}\label{eqn-1}
    A_{n+1}^p(t)=t\sum_{i=0}^{n-1}\binom{n}{i}A_i^{\sigma}(t)A_{n-i}^p(t)+A_n^{\sigma}(t).\end{equation}

    \begin{figure}[ht]
        \centering
        \begin{tikzpicture}
            \draw (-6,5) rectangle (-5,4);
            \fill (-4.7,5.3) circle[radius=2pt];
            \node at (-4.1,5.5) {$n+1$};
            \draw (-4.4,5) rectangle (-3.4,4);
            \draw[dashed] (-4.7,4) -- (-4.7,5.3);
            \node at (-5.5,4.5) {$A^{\sigma}$};
            \node at (-3.9,4.5) {$A^p$};
            \node at (-5.5,3.8) {$\underbrace{\phantom{a+b}}_i$ };
        \end{tikzpicture}
        \caption{A decomposition of $\pi \in \S_{n+1}(p)$}
        \label{fig:decomposition of pi(p)}
    \end{figure}
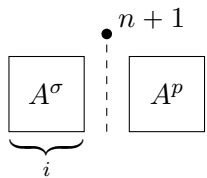

\noindent
Then, multiplying by $\dfrac{x^n}{n!}$ both sides of (\ref{eqn-1}) and summing over all $n\geq 0$, we obtain
    \begin{align*}
        \sum_{n\ge 0}\frac{A_{n+1}^p(t)x^n}{n!} &= t\sum_{n\ge 0}\sum_{i=0}^{n-1}\frac{A_i^{\sigma}(t)x^i}{i!}\frac{A_{n-i}^p(t)x^{n-i}}{(n-i)!}+\sum_{n\ge 0}A_n^{\sigma}(t)\frac{x^n}{n!} \\
        &= t\sum_{n\ge 0}\sum_{i=0}^{n}\frac{A_i^{\sigma}(t)x^i}{i!}\frac{A_{n-i}^p(t)x^{n-i}}{(n-i)!}-t\sum_{n\ge0}A_n^{\sigma}(t)\frac{x^n}{n!}+\sum_{n\ge 0}A_n^{\sigma}(t)\frac{x^n}{n!} \\
        &= tA^{\sigma}(x,t)A^p(x,t) -tA^{\sigma}(x,t)+A^{\sigma}(x,t).
    \end{align*}

    It follows that $$\sum_{n\ge 0}\frac{A_{n+1}^p(t)x^n}{n!}=\frac{{\partial A^p}}{\partial x}=tA^{\sigma}A^p+(1-t)A^{\sigma}=tA^{\sigma}\left(A^p-1+\frac{1}{t}\right),$$
    where we abbreviate $A^{\sigma}(x,t)$ (resp., $A^p(x,t)$) as $A^{\sigma}$ (resp., $A^p$).
    Setting $\widetilde{A^p}=A^{p}-1+\dfrac{1}{t}$, the initial condition becomes $\widetilde{A^p}(0,t)=\dfrac{1}{t}$ since $A^p(0,t)=1$, and 
    we get $\dfrac{\partial \widetilde{A^p}}{\partial x}=tA^{\sigma}\widetilde{A^p}.$
    Solving this equation with the initial condition, we have 
    $\ln \widetilde{A^p}-\ln \dfrac{1}{t}=t \int_{0}^{x} A^{\sigma}(y,t) \,dy$, and 
    $\widetilde{A^p} = \dfrac{1}{t} e^{t \int_{0}^{x} A^{\sigma}(y,t) \,dy}$.
    Therefore, $A^p = \dfrac{1}{t} e^{t \int_{0}^{x} A^{\sigma}(y,t) \,dy} +1-\dfrac{1}{t}=\dfrac{1}{t} (e^{t \int_{0}^{x} A^{\sigma}(y,t) \,dy}-1) +1$.
\end{proof}

The following result immediately follows from the Structure Theorem, and is useful when we consider the $\des$-Wilf-equivalences for various patterns. It also strengthens Theorem~\ref{thm:eli-kit}.

\begin{corollary}\label{col:structure}
    $\sigma\overset{\des}{\sim}\sigma'$ if and only if $\sigma k\overset{\des}{\sim}\sigma' k$, where $\sigma$ and $\sigma'$ are two consecutive patterns on $\{1,2,\ldots,k-1\}$.
\end{corollary}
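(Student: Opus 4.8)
The plan is to restate both $\des$-Wilf-equivalences as identities of exponential generating functions and then read the corollary off the Structure Theorem. Recall that $\sigma\overset{\des}{\sim}\sigma'$ holds precisely when $A_n^{\sigma}(t)=A_n^{\sigma'}(t)$ for all $n\ge 0$, which is equivalent to the equality $A^{\sigma}(x,t)=A^{\sigma'}(x,t)$ as formal power series in $x$ (with coefficients in $\mathbb{Q}[t]$, or, if it is more convenient, in $\mathbb{Q}(t)$); the same remark applies verbatim to the patterns $p=\sigma k$ and $p'=\sigma' k$, so that $\sigma k\overset{\des}{\sim}\sigma' k$ if and only if $A^{p}(x,t)=A^{p'}(x,t)$. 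For the forward implication, assume $A^{\sigma}=A^{\sigma'}$; then the right-hand side of \eqref{id:struc} is literally the same function whether it is built from $\sigma$ or from $\sigma'$, hence $A^{p}=A^{p'}$, i.e. $\sigma k\overset{\des}{\sim}\sigma' k$.

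For the converse, suppose $A^{p}=A^{p'}$. Substituting \eqref{id:struc} for both sides and simplifying gives
\[
e^{\,t\int_{0}^{x}A^{\sigma}(y,t)\,dy}=e^{\,t\int_{0}^{x}A^{\sigma'}(y,t)\,dy}.
\]
Both exponents are formal power series in $x$ with vanishing constant term, since the operator $\int_{0}^{x}$ produces series in the ideal $(x)$; therefore applying the formal logarithm is legitimate and recovers the exponents, yielding $t\int_{0}^{x}A^{\sigma}(y,t)\,dy=t\int_{0}^{x}A^{\sigma'}(y,t)\,dy$. Cancelling the common factor $t$ (a nonzero element of the coefficient ring) and differentiating both sides with respect to $x$ gives $A^{\sigma}(x,t)=A^{\sigma'}(x,t)$, that is, $\sigma\overset{\des}{\sim}\sigma'$.

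I do not expect any genuine obstacle here: all the substance lies in Theorem~\ref{structure thm}, and what is left is simply the observation that the transformation $A^{\sigma}\mapsto A^{p}$ encoded by \eqref{id:struc} is invertible as an operation on formal power series. The only points that deserve a sentence of care are (i) pinning down the ambient ring of power series so that ``$A^{\sigma}=A^{\sigma'}$'' faithfully encodes $\des$-Wilf-equivalence, and (ii) justifying the use of the formal logarithm and the cancellation of $t$ — both routine once one works in $\mathbb{Q}(t)[[x]]$ and notes that the relevant exponents lie in the maximal ideal $(x)$.
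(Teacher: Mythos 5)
Your proof is correct and follows exactly the route the paper intends: the paper states this corollary with no proof beyond the remark that it ``immediately follows from the Structure Theorem,'' and your argument supplies the implicit details, in particular the observation that the map $A^{\sigma}\mapsto A^{p}$ given by \eqref{id:struc} is invertible on formal power series (via the formal logarithm and differentiation), which is what makes the converse direction work. The care you take about the ambient ring $\mathbb{Q}(t)[[x]]$ and the exponent lying in the ideal $(x)$ is exactly the right level of justification.
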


Note that the three trivial bijections give rise to obvious Wilf-equivalences such as $123\sim 321$. When we consider $\des$-Wilf-equivalences however, the following result will be handy. For any vincular pattern $\omega$, we abuse the notation and still use $\omega^c$ to denote the vincular pattern whose underlying permutation is the complement of the underlying permutation of $\omega$ with the same set of positions being underlined. For instance, $(\underline{231}4 )^c=\underline{324}1$ and $(\underline{21}4\underline{53})^c=\underline{45}2\underline{13}$. The reverse $\omega^r$ and the reverse-complement $\omega^{cr}=\omega^{rc}$ are understood similarly. So, for example,  $(\underline{231}4 )^r=4\underline{132}$ and $(\underline{21}4\underline{53})^{rc}=\underline{31}2\underline{54}$.

\begin{proposition} \label{omega--omega^rc}
    Let $\omega$ be a vincular pattern, then $\omega \overset{\des}{\sim} \omega^{rc}$, or equivalently $\omega^c \overset{\des}{\sim} \omega^{r}$.
\end{proposition}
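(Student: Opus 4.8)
The plan is to establish $\omega \overset{\des}{\sim} \omega^{rc}$ by showing that the trivial involution $\phi_{rc} := \phi_c \circ \phi_r$ on $\S_n$ restricts, for every $n$, to a descent-preserving bijection from $\S_n(\omega)$ onto $\S_n(\omega^{rc})$. This naturally splits into two claims: (i) $\phi_{rc}$ maps $\S_n(\omega)$ bijectively onto $\S_n(\omega^{rc})$, and (ii) $\des(\pi) = \des(\phi_{rc}(\pi))$ for all $\pi \in \S_n$.

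For (i) I would analyze $\phi_r$ and $\phi_c$ one at a time. Under the reverse, a subsequence of $\pi$ at positions $i_1 < \cdots < i_\ell$ becomes, in $\pi^r$, the reversed string of the same entries, now occupying positions $n+1-i_\ell < \cdots < n+1-i_1$; two consecutive such positions are adjacent exactly when the corresponding original positions were, and the standardization of the reversed subsequence is the reverse of the original standardization. Hence $\pi$ contains $\omega$ if and only if $\pi^r$ contains the pattern obtained from $\omega$ by reversing both its underlying permutation and the positions of its underlined blocks, i.e.\ $\pi^r$ contains $\omega^r$. Under the complement, positions (hence all adjacency constraints) are unchanged while each value $v$ is replaced by $n+1-v$, so standardizations are complemented and $\pi$ contains $\omega$ if and only if $\pi^c$ contains $\omega^c$. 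Composing these two facts gives that $\pi$ avoids $\omega$ iff $\pi^{rc}$ avoids $\omega^{rc}$; since $\phi_{rc}$ is an involution on $\S_n$, this yields the desired bijection.

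For (ii) I would use the elementary identities $\des(\pi^r) = (n-1) - \des(\pi)$ and $\des(\pi^c) = (n-1) - \des(\pi)$ for $\pi \in \S_n$ — both $\phi_r$ and $\phi_c$ interchange descents and ascents — so that $\des(\phi_{rc}(\pi)) = \des((\pi^r)^c) = (n-1) - \des(\pi^r) = \des(\pi)$. Together with (i), $\phi_{rc}$ is the required descent-preserving bijection, proving $\omega \overset{\des}{\sim} \omega^{rc}$. The equivalent formulation $\omega^c \overset{\des}{\sim} \omega^r$ then follows by applying the result to $\omega^c$ in place of $\omega$ and noting that $(\omega^c)^{rc} = \omega^r$, since $\phi_c$ is an involution commuting with $\phi_r$. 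I do not anticipate a genuine obstacle; the one place demanding care is the bookkeeping in step (i) — verifying that the vincular (adjacency) constraints of $\omega$ transform precisely into those of $\omega^{rc}$, rather than into a pattern with differently placed underlined blocks.
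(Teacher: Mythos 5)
Your proposal is correct and follows essentially the same route as the paper: the paper's proof likewise computes $\des(\pi^{rc})=n-1-\des(\pi^{r})=\des(\pi)$ and invokes the fact that $\pi\in\S_n(\omega)$ iff $\pi^{rc}\in\S_n(\omega^{rc})$. Your step (i) merely spells out in more detail the adjacency/standardization bookkeeping that the paper leaves as an assertion.
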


\begin{proof}
    It is easily seen that $\des(\pi^{rc})=n-1-\des(\pi^r)=n-1-(n-1-\des(\pi))=\des(\pi)$.
    This indicates that the composition map $\phi_c\circ\phi_r$ preserves the permutation statistic $\des$. Moreover, $\pi \in \S_n(\omega )$ if and only if $\pi^{rc} \in \S_n(\omega^{rc} )$.
    Hence $\omega \overset{\des}{\sim} \omega^{rc}$.
\end{proof}

The following fact will facilitate our derivations of generating functions.

\begin{proposition} \label{omega--omega^c}
    Let $\omega$ be a vincular pattern, then $A^{\omega^c}(x,t)=A^{\omega^r}(x,t)=\dfrac{1}{t} (A^{\omega}(xt,\dfrac{1}{t})-1)+1$.
\end{proposition}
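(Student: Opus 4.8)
The first equality is essentially free. Proposition~\ref{omega--omega^rc} asserts $\omega^c \overset{\des}{\sim} \omega^r$, which by definition means that $|\{\pi \in \S_n(\omega^c): \des(\pi)=k\}| = |\{\pi \in \S_n(\omega^r): \des(\pi)=k\}|$ for all $n\ge 1$ and $k\ge 0$; hence $A_n^{\omega^c}(t) = A_n^{\omega^r}(t)$ for every $n$, and therefore $A^{\omega^c}(x,t) = A^{\omega^r}(x,t)$. So the substance of the proposition is the identity $A^{\omega^c}(x,t) = \tfrac{1}{t}\bigl(A^{\omega}(xt,\tfrac1t)-1\bigr)+1$, and the plan is to prove this by working directly with the complement involution $\phi_c$ at the level of coefficients.

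For each $n$, $\phi_c$ restricts to a bijection $\S_n(\omega) \to \S_n(\omega^c)$ — the standard fact that $\pi$ avoids $\omega$ if and only if $\pi^c$ avoids $\omega^c$, already used in the proof of Proposition~\ref{omega--omega^rc}. The relevant statistic transformation is the elementary observation that for $\pi \in \S_n$ with $n\ge 1$ one has $\pi_i > \pi_{i+1} \iff \pi^c_i < \pi^c_{i+1}$, so $\des(\pi^c) = (n-1) - \des(\pi)$. Summing $t^{\des}$ over $\S_n(\omega^c)$ and re-indexing via $\pi \mapsto \pi^c$ then gives, for $n \ge 1$,
$$
A_n^{\omega^c}(t) = \sum_{\pi \in \S_n(\omega)} t^{(n-1)-\des(\pi)} = t^{\,n-1}\, A_n^{\omega}(1/t),
$$
while the case $n=0$ is exceptional: $A_0^{\omega^c}(t) = 1 = A_0^{\omega}(t)$, the formula $t^{\,n-1}A_n^{\omega}(1/t)$ being inapplicable there since the exponent $n-1$ equals $-1$ at $n=0$ whereas the empty permutation has no descents.

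It then remains to assemble the exponential generating function by peeling off the constant term:
$$
A^{\omega^c}(x,t) = 1 + \sum_{n\ge 1} t^{\,n-1} A_n^{\omega}(1/t)\,\frac{x^n}{n!}
= 1 + \frac1t \sum_{n\ge 1} A_n^{\omega}(1/t)\,\frac{(xt)^n}{n!}
= 1 + \frac1t\bigl(A^{\omega}(xt,1/t) - 1\bigr),
$$
where the last step uses $A_0^{\omega}(1/t)=1$. This is precisely the asserted closed form.

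The only genuine subtlety — and the one place the argument would break if handled carelessly — is the bookkeeping around $n=0$: the descent–complementation identity $\des(\pi^c)=(n-1)-\des(\pi)$ holds only for nonempty permutations, so the empty permutation must be isolated by hand, and it is exactly this separation that manufactures the $-1$ and $+1$ appearing in the final expression. Everything else is routine manipulation of exponential generating functions; in particular, no differential-equation argument of the kind used for the Structure Theorem is needed here.
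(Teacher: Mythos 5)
Your proof is correct and follows essentially the same route as the paper's: both use the bijection $\phi_c:\S_n(\omega)\to\S_n(\omega^c)$ together with $\des(\pi^c)=n-1-\des(\pi)$ for $n\ge 1$, separate off the $n=0$ term, and invoke Proposition~\ref{omega--omega^rc} for the equality $A^{\omega^c}=A^{\omega^r}$. Your explicit coefficient identity $A_n^{\omega^c}(t)=t^{n-1}A_n^{\omega}(1/t)$ is just a restatement of the paper's generating-function manipulation, so there is nothing substantively different here.
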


\begin{proof}
    Note that $\des(\pi ^c)=n-1-\des(\pi)$ for $n\ge 1$, and $\pi \in \S_n(\omega) $ if and only if $\pi ^c \in \S_n(\omega^c)$.
    It follows that
    \begin{align*}
        A^{\omega^c}(x,t)&=\sum_{n\ge 0} \sum_{\pi\in \S_n(\omega ^c)} t ^{\des (\pi)}\dfrac{x^n}{n!} \\
        &= \sum_{n\ge 0} \sum_{\pi^c \in \S_n(\omega ^c)} t ^{\des (\pi^c)}\dfrac{x^n}{n!}\\
        &=1+ \sum_{n\ge 1} \sum_{\pi \in \S_n(\omega )} t ^{n-1-\des (\pi)}\dfrac{x^n}{n!}  \\
        &=1+\dfrac{1}{t} \left(A^{\omega}(xt,\dfrac{1}{t})-1\right).
    \end{align*}
   By Proposition~\ref{omega--omega^rc}, $\omega^r \overset{\des}{\sim} \omega^{c}$. Hence, $A^{\omega^c}(x,t) = A^{\omega^r}(x,t)$, completing the proof.
\end{proof}


\subsection{Classifications for lengths 2 and 3}

We now provide the complete $\des$-Wilf classification for quasi-consecutive patterns $p = \sigma j$ of length 2 or 3. Using the Structure Theorem, we compute the generating functions $A^p(x, t)$ for all possible choices of $p$, except for $p = \underline{13}2$ and $p = \underline{31}2$. The case of length 2 patterns, $p = \underline{1}2$ and $p = \underline{2}1$, is trivial, as each class contains a unique permutation: $12\cdots n \in \S_n(\underline{2}1)$ and $n(n-1)\cdots 1 \in \S_n(\underline{1}2)$. Their generating functions are straightforward to derive and are included in Table~\ref{tab:2 and 3} for completeness.


\begin{table}[h!]
    \setlength{\tabcolsep}{15pt}    
    \renewcommand{\arraystretch}{1.2}   
    \begin{center}
     \begin{tabular}{c|c|c} 
        Pattern $\sigma j$  & $A^{\sigma j}(x,t)$ or $B^{\sigma j}(x,t)$ & Proved in \\ \hline 
        $\underline{1}2$ & $\dfrac{1}{t}(e^{tx}-1)+1$ & N/A  \\ 
        $\underline{2}1$ & $e^{x}$ & N/A  \\ 
        $\underline{12}3$ & $\dfrac{1}{t} (e^{\frac{1}{t}(e^{tx}-1)+x(t-1)}-1)+1$ & Theorem~\ref{thm:12-3 and 21-3} \\ 
        $\underline{32}1$ & $e^{t(e^{x}-1)+x(1-t)}$ & Corollary~\ref{cor:32-1 and 23-1} \\ 
        $\underline{21}3$ & $\dfrac{1}{t} (e^{t(e^x-1)}-1)+1$ & Theorem~\ref{thm:12-3 and 21-3} \\ 
        $\underline{23}1$ & $e^{\frac{1}{t}(e^{xt}-1)}$ &  Corollary~\ref{cor:32-1 and 23-1} \\ 
        $\underline{13}2$, $\underline{31}2$ & $\dfrac{1+x(t-1)-\sqrt{1-2x(t+1)+x^2(t-1)^2}}{2tx}$ &  Proposition~\ref{13-2}
        \end{tabular}
  \end{center}
  \caption{The $\des$-Wilf classifications of pattern $\sigma j$ whose length is $2$ or $3$}
  \label{tab:2 and 3}
\end{table}





Next, we compute the generating functions for patterns of length 3. Among all six patterns, only half of the derivations (one from each complementary pair) are necessary, thanks to Proposition~\ref{omega--omega^c}.

\begin{theorem} \label{thm:12-3 and 21-3}
    We have
    \begin{align*}
    A^{\underline{12}3}(x,t) &= \dfrac{1}{t} (e^{\frac{1}{t}(e^{tx}-1)+x(t-1)}-1)+1,\\
    A^{\underline{21}3}(x,t) &= \dfrac{1}{t} (e^{t(e^x-1)}-1)+1.
    \end{align*}
\end{theorem}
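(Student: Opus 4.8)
The plan is to apply the Structure Theorem (Theorem~\ref{structure thm}) to each of the two patterns, which reduces everything to knowing the descent generating function $A^{\sigma}(x,t)$ for the underlying consecutive patterns $\sigma = \underline{12}$ and $\sigma = \underline{21}$ of length~$2$. These two are immediate: $\S_n(\underline{12})$ consists only of the decreasing permutation $n(n-1)\cdots 1$, so $A_n^{\underline{12}}(t) = t^{n-1}$ for $n \ge 1$ and $A_0^{\underline{12}}(t)=1$; summing the exponential series gives $A^{\underline{12}}(x,t) = 1 + \frac{1}{t}(e^{tx}-1)$. Dually, $\S_n(\underline{21})$ consists only of the increasing permutation $12\cdots n$, so every such permutation has no descents, $A_n^{\underline{21}}(t) = 1$, and $A^{\underline{21}}(x,t) = e^x$. (Alternatively, the second of these follows from the first by Proposition~\ref{omega--omega^c}, since $\underline{21} = (\underline{12})^r$.)

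Next I would substitute into the formula $A^p(x,t) = \frac{1}{t}\bigl(e^{t \int_0^x A^{\sigma}(y,t)\,dy} - 1\bigr) + 1$. For $p = \underline{12}3$ with $\sigma = \underline{12}$, the inner integral is $\int_0^x \bigl(1 + \frac{1}{t}(e^{ty}-1)\bigr)dy = x + \frac{1}{t}\bigl(\frac{e^{tx}-1}{t} - x\bigr) = \frac{1}{t^2}(e^{tx}-1) + x\cdot\frac{t-1}{t}$; multiplying by $t$ gives the exponent $\frac{1}{t}(e^{tx}-1) + x(t-1)$, which yields exactly the stated expression for $A^{\underline{12}3}(x,t)$. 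For $p = \underline{21}3$ with $\sigma = \underline{21}$, the inner integral is $\int_0^x e^y\,dy = e^x - 1$, so $t$ times it is $t(e^x-1)$, giving $A^{\underline{21}3}(x,t) = \frac{1}{t}(e^{t(e^x-1)}-1)+1$ as claimed.

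There is essentially no obstacle here: the argument is a direct application of the Structure Theorem together with two trivial base cases, and the only thing to be careful about is the bookkeeping in the antiderivative for the $\underline{12}$ case (keeping the $t$'s in the right places). One should also check that the two given closed forms are consistent with Proposition~\ref{omega--omega^c} as a sanity test — $\underline{21}3$ is not the complement of $\underline{12}3$, so this pair is not related by a trivial bijection, and indeed the formulas are genuinely different — but that check is optional and not needed for the proof itself.

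\begin{proof}
By the Structure Theorem (Theorem~\ref{structure thm}), for $p = \sigma k$ we have
\[
A^p(x,t) = \frac{1}{t}\Bigl(e^{t \int_0^x A^{\sigma}(y,t)\,dy} - 1\Bigr) + 1,
\]
so it suffices to determine $A^{\sigma}(x,t)$ for $\sigma \in \{\underline{12}, \underline{21}\}$. Since $\S_n(\underline{12})$ contains only $n(n-1)\cdots 1$, we have $A_n^{\underline{12}}(t) = t^{n-1}$ for $n \ge 1$, hence
\[
A^{\underline{12}}(x,t) = 1 + \sum_{n\ge 1}\frac{t^{n-1}x^n}{n!} = 1 + \frac{1}{t}(e^{tx}-1).
\]
Likewise $\S_n(\underline{21})$ contains only $12\cdots n$, each with no descents, so $A_n^{\underline{21}}(t) = 1$ and $A^{\underline{21}}(x,t) = e^x$.

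For $p = \underline{12}3$, take $\sigma = \underline{12}$. Then
\[
\int_0^x A^{\underline{12}}(y,t)\,dy = \int_0^x \Bigl(1 + \frac{1}{t}(e^{ty}-1)\Bigr) dy = x + \frac{1}{t}\Bigl(\frac{e^{tx}-1}{t} - x\Bigr) = \frac{1}{t^2}(e^{tx}-1) + \frac{t-1}{t}\,x,
\]
so the exponent in the Structure Theorem is
\[
t \int_0^x A^{\underline{12}}(y,t)\,dy = \frac{1}{t}(e^{tx}-1) + x(t-1),
\]
which gives $A^{\underline{12}3}(x,t) = \frac{1}{t}\bigl(e^{\frac{1}{t}(e^{tx}-1)+x(t-1)} - 1\bigr) + 1$.

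For $p = \underline{21}3$, take $\sigma = \underline{21}$. Then $\int_0^x A^{\underline{21}}(y,t)\,dy = \int_0^x e^y\,dy = e^x - 1$, so the exponent is $t(e^x-1)$ and
\[
A^{\underline{21}3}(x,t) = \frac{1}{t}\bigl(e^{t(e^x-1)} - 1\bigr) + 1. \qedhere
\]
\end{proof}
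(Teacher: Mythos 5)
Your proposal is correct and follows essentially the same route as the paper: identify $A^{\underline{12}}(x,t)=1+\frac{1}{t}(e^{tx}-1)$ and $A^{\underline{21}}(x,t)=e^x$ from the trivial avoidance classes, then apply the Structure Theorem and evaluate the integral, with the same antiderivative computation for the $\underline{12}$ case. The only difference is that you write out the $\underline{21}3$ case explicitly where the paper omits it as "similar."
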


\begin{proof}
    We first compute $A^{\underline{12}}(x,t)$. Clearly, $\S_n(\underline{12})=\S_n(12)=\S_n(\underline{1}2)=\{n(n-1)\cdots 1\}$. Hence $A^{\underline{12}}(x,t)=A^{\underline{1}2}(x,t)=\dfrac{1}{t}(e^{tx}-1)+1$.
    By Theorem~\ref{structure thm}, we have 
    $\displaystyle{A^{\underline{12}3}(x,t)=\frac{1}{t} (e^{t \int_{0}^{x} A^{\underline{12}}(y,t) \,dy}-1) +1 }$, while 
    \begin{align*}
        \int_{0}^{x} A^{\underline{12}}(y,t) \,dy &=\int_{0}^{x} \left(\frac{1}{t}(e^{ty}-1)+1\right)\,dy \\
        &= \left. \left(\frac{1}{t}(\frac{1}{t} e^{ty}-y)+y\right) \right|_{0}^{x} \\
        &= \frac{1}{t^2}(e^{tx}-1)+\frac{x}{t}(t-1).
    \end{align*}
    We thus obtain $\displaystyle{A^{\underline{12}3}(x,t)=\frac{1}{t} (e^{\frac{1}{t}(e^{tx}-1)+x(t-1)}-1)+1}$, as desired. The derivation of $A^{\underline{21}3}(x,t)$ is similar and is omitted.
\end{proof}

Applying Proposition~\ref{omega--omega^c}, we immediately get the generating functions for two complementary patterns through simple calculation.

\begin{corollary} \label{cor:32-1 and 23-1}
    We have
    \begin{align*}
    A^{\underline{32}1}(x,t) &= e^{t(e^{x}-1)+x(1-t)},\\
    A^{\underline{23}1}(x,t) &= e^{\frac{1}{t}(e^{xt}-1)}.
    \end{align*}
\end{corollary}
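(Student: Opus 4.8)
The plan is to identify each of the two patterns as the complement of a pattern whose generating function was already obtained in Theorem~\ref{thm:12-3 and 21-3}, and then apply Proposition~\ref{omega--omega^c}. First I would verify the identifications $\underline{32}1=(\underline{12}3)^c$ and $\underline{23}1=(\underline{21}3)^c$: complementing the underlying permutation of $\underline{12}3$ turns $123$ into $321$ while the underline on the first two positions is preserved, yielding $\underline{32}1$; similarly $213$ complements to $231$, so $(\underline{21}3)^c=\underline{23}1$. Note that neither $\underline{32}1$ nor $\underline{23}1$ is itself of the form $\sigma k$, so the Structure Theorem does not apply to them directly, which is precisely why we route through the complement.

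Given these identifications, Proposition~\ref{omega--omega^c} immediately gives
$$A^{\underline{32}1}(x,t)=\frac1t\Bigl(A^{\underline{12}3}\bigl(xt,\tfrac1t\bigr)-1\Bigr)+1,\qquad A^{\underline{23}1}(x,t)=\frac1t\Bigl(A^{\underline{21}3}\bigl(xt,\tfrac1t\bigr)-1\Bigr)+1.$$
The remaining work is a substitution. Writing $A^{\underline{12}3}(x,t)=\frac1t(e^{E(x,t)}-1)+1$ with $E(x,t)=\frac1t(e^{tx}-1)+x(t-1)$, one checks that $E(xt,\tfrac1t)=t(e^{x}-1)+x(1-t)$, hence $A^{\underline{12}3}(xt,\tfrac1t)=t\bigl(e^{t(e^x-1)+x(1-t)}-1\bigr)+1$. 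Plugging this into the first identity, the outer operation $u\mapsto\frac1t(u-1)+1$ cancels the leading factor $t$ and the trailing $\pm1$, leaving exactly $A^{\underline{32}1}(x,t)=e^{t(e^x-1)+x(1-t)}$. The computation for $\underline{23}1$ proceeds identically: starting from $A^{\underline{21}3}(x,t)=\frac1t(e^{t(e^x-1)}-1)+1$, the substitution $(x,t)\mapsto(xt,\tfrac1t)$ replaces $t(e^x-1)$ by $\frac1t(e^{xt}-1)$, and the same cancellation produces $A^{\underline{23}1}(x,t)=e^{\frac1t(e^{xt}-1)}$.

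There is no genuine obstacle here; the proof is a short calculation. The only point demanding attention is careful bookkeeping of the double substitution $x\mapsto xt$, $t\mapsto 1/t$ inside the nested exponential, together with the observation that $u\mapsto\frac1t(u-1)+1$ is the inverse of $v\mapsto t(v-1)+1$ — this is what makes the final simplification collapse to a clean closed form rather than a messy rational-in-$t$ expression.
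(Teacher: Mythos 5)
Your proposal is correct and follows exactly the paper's route: the paper likewise obtains these two generating functions by applying Proposition~\ref{omega--omega^c} to the formulas of Theorem~\ref{thm:12-3 and 21-3}, noting $\underline{32}1=(\underline{12}3)^c$ and $\underline{23}1=(\underline{21}3)^c$. Your explicit verification of the substitution $E(xt,\tfrac1t)=t(e^x-1)+x(1-t)$ and the cancellation under $u\mapsto\frac1t(u-1)+1$ is exactly the ``simple calculation'' the paper leaves to the reader.
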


\begin{remark}
It is worth noting that when we plug in $t=1$ for each of the four generating functions contained in Theorem~\ref{thm:12-3 and 21-3} and Corollary~\ref{cor:32-1 and 23-1}, they all reduce to $e^{e^x-1}$, which is the well-known exponential generating function for the Bell numbers $\{B_n\}_{n\ge 0}=\{1,1,2,5,15,52,203,877,\ldots\}$; see \href{https://oeis.org/A000110}{\tt{oeis:A000110}}. This enumerative fact was first observed by Claesson~\cite[Prop.~2 and 3]{cla01}. In fact, Claesson constructed two bijections between pattern avoiding permutations and set partitions to establish that
\begin{align}\label{id:perm-setpar}
|\S_n(1\underline{23})|=|\S_n(1\underline{32})|=B_n.
\end{align}
\end{remark}

Taking into consideration the effect of these bijections on the descent numbers of pattern avoiding permutations, we are naturally led to the following refinement of \eqref{id:perm-setpar}. Let $\Par_n$ be the collection of all set partitions of $[n]$. Given a set partition $\Pi\in\Par_n$, we denote by $\blk(\Pi)$ and $\sing(\Pi)$ the number of blocks and the number of singletons (i.e., $1$-blocks) of $\Pi$, respectively. For example, the set partition $\Pi=135/2/4689/7\in\Par_9$ satisfies $\blk(\Pi)=4$ and $\sing(\Pi)=2$.

\begin{corollary}
We have the following alternative interpretations of the generating functions:
\begin{align}
\label{id:gf321 in setpar}
A^{\underline{32}1}(x,t) &= \sum_{n\ge 0}\sum_{\Pi\in\Par_n}t^{\blk(\Pi)-\sing(\Pi)}\frac{x^n}{n!},\\
A^{\underline{21}3}(x,t) &= \sum_{n\ge 0}\sum_{\Pi\in\Par_n}t^{\blk(\Pi)-1}\frac{x^n}{n!}.
\label{id:gf213 in setpar}
\end{align}
\end{corollary}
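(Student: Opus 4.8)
The plan is to derive \eqref{id:gf321 in setpar} and \eqref{id:gf213 in setpar} from the closed forms already obtained in Corollary~\ref{cor:32-1 and 23-1} and Theorem~\ref{thm:12-3 and 21-3}, by recognizing the exponential generating functions on the right-hand sides. Recall that the exponential formula says that if a combinatorial structure is built by partitioning $[n]$ into blocks and placing a ``connected'' structure on each block, then the EGF for the whole class is $\exp$ of the EGF for the connected pieces. Here the connected piece on a block of size $m$ is just a single set, contributing weight $1$ when $m=1$ (a singleton, which by the statistics we must weight) and weight $t$ when $m\ge 2$. So for \eqref{id:gf321 in setpar}, a block of size $m\ge 1$ contributes $t^{[m\ge 2]}$, and the EGF of a single block is therefore $x + t\sum_{m\ge 2}\frac{x^m}{m!} = x + t(e^x - 1 - x)$. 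Applying the exponential formula yields $\exp\!\big(x + t(e^x-1-x)\big) = e^{t(e^x-1) + x(1-t)}$, which is exactly $A^{\underline{32}1}(x,t)$ from Corollary~\ref{cor:32-1 and 23-1}. For \eqref{id:gf213 in setpar}, every block (including singletons) carries weight $t$ except that the overall power is $\blk(\Pi)-1$; so each block contributes $t$ and we divide by one factor of $t$. The EGF of a single $t$-weighted block is $t(e^x-1)$, so the exponential formula gives $e^{t(e^x-1)}$, and dividing by $t$ and adjusting the constant term (a set partition of $[0]$ has no blocks, so the $n=0$ term is $t^{-1}$ naively, which must be corrected to $1$) gives precisely $\frac{1}{t}(e^{t(e^x-1)}-1)+1 = A^{\underline{21}3}(x,t)$ from Theorem~\ref{thm:12-3 and 21-3}.

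The steps, in order, are: (i) state the exponential formula in the form needed; (ii) identify, for \eqref{id:gf321 in setpar}, the per-block EGF as $x + t(e^x-1-x)$ and compute $\exp$ of it, checking it matches Corollary~\ref{cor:32-1 and 23-1}; (iii) for \eqref{id:gf213 in setpar}, identify the per-block EGF as $t(e^x-1)$, take $\exp$, then account for the shift by $\blk(\Pi)-1$ rather than $\blk(\Pi)$, which replaces $\sum_{\Pi}t^{\blk(\Pi)}$ by $t^{-1}\sum_{\Pi}t^{\blk(\Pi)}$ for $n\ge 1$ and leaves the $n=0$ term equal to $1$, yielding $\frac1t(e^{t(e^x-1)}-1)+1$; (iv) conclude both identities. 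Alternatively — and perhaps more in the spirit of a ``corollary'' — one can skip the exponential formula and simply verify directly that the right-hand sides, call them $F(x,t)$ and $G(x,t)$, satisfy the same functional/differential equations and initial conditions that pin down $A^{\underline{32}1}$ and $A^{\underline{21}3}$: differentiating $\sum_n\sum_{\Pi\in\Par_n}t^{\blk(\Pi)-\sing(\Pi)}\frac{x^n}{n!}$ in $x$ corresponds to removing the block containing a distinguished element, giving $\partial_x F = F\cdot(1 + t(e^x-1))$ with $F(0,t)=1$, whose solution is $e^{t(e^x-1)+x(1-t)}$; and similarly $\partial_x\big(t(G-1)+1\big) = \big(t(G-1)+1\big)\cdot t e^x$ recovers $G$. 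Either route is short.

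The only genuine subtlety — and the step I would treat most carefully — is the bookkeeping of the constant ($n=0$) term and the ``$-1$'' shifts in \eqref{id:gf213 in setpar}: the exponential formula naturally produces $\sum_{\Pi}t^{\blk(\Pi)}\frac{x^n}{n!} = e^{t(e^x-1)}$, and one must be explicit that dividing by $t$ and adding the correction $1 - \frac1t$ is exactly what accounts for the exponent $\blk(\Pi)-1$ (for $n\ge 1$) together with the convention that the empty set partition contributes $1$ to the sum at $n=0$. Once that is spelled out, matching with the formulas in Theorem~\ref{thm:12-3 and 21-3} and Corollary~\ref{cor:32-1 and 23-1} is immediate, and no real computation remains. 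I would also remark that combining these two identities with Claesson's bijections in \eqref{id:perm-setpar} shows that those bijections can be taken to send $\des$ on permutations to $\blk - \sing$ (respectively $\blk - 1$) on set partitions — a statistic-refined strengthening of Claesson's result that is worth stating alongside the corollary.
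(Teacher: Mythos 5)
Your proof is correct, but it takes a genuinely different route from the paper. You verify the two identities analytically: you compute the right-hand sides of \eqref{id:gf321 in setpar} and \eqref{id:gf213 in setpar} via the exponential formula (per-block weight $x+t(e^x-1-x)$ for the first, $t(e^x-1)$ for the second, with the shift by $t^{-1}$ and the $n=0$ correction handled explicitly) and match them against the closed forms already established in Theorem~\ref{thm:12-3 and 21-3} and Corollary~\ref{cor:32-1 and 23-1}. Since those closed forms were obtained independently via the Structure Theorem and complementation, there is no circularity, and your bookkeeping of the constant term --- which is indeed the only delicate point, and in fact papers over a small convention issue in the statement itself, where the empty partition would literally contribute $t^{-1}$ to \eqref{id:gf213 in setpar} --- is handled correctly. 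The paper instead proves the corollary bijectively: it builds an explicit $\des$-preserving bijection $\varphi:\Par_n\to\S_n(\underline{32}1)$ (Claesson's map composed with reversal, described via a standard representation of set partitions) under which $\des(\varphi(\Pi))=\blk(\Pi)-\sing(\Pi)$, and similarly for $\underline{21}3$ with a second standard representation. The bijective route buys a concrete combinatorial correspondence refining Claesson's result; your route is shorter and essentially computation-free. One caveat on your closing remark: the equidistribution you establish shows that \emph{some} bijection carries $\des$ to $\blk-\sing$ (respectively $\blk-1$), but it does not by itself show that Claesson's specific bijections do so --- that stronger claim is exactly what the paper's explicit construction delivers, so you should either prove it as the paper does or weaken the remark to an equidistribution statement.
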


\begin{proof}
We begin with the proof of \eqref{id:gf321 in setpar}. In view of the original definition of $A^{\underline{32}1}(x,t)$, it suffices to show that the statistic ``$\des$'' over $\S_n(\underline{32}1)$ is equidistributed with ``$\blk -\sing$'' over $\Par_n$. Combining the bijection constructed in Claesson's second proof of \cite[Prop.\ 2]{cla01} with the reversal map $\phi_r$, we get a bijection, say $\varphi$, from $\Par_n$ to $\S_n(\underline{32}1)$ that can be explicitly described as follows. Given a partition $\Pi\in\Par_n$, we say it is expressed in the \textit{standard representation} if the following conditions are enforced:
\begin{enumerate}[label=(\roman*)]
    \item Each block ends with its smallest element, and all remaining elements are written in increasing order.
    \item The blocks are written in increasing order of their smallest element with slashes separating the blocks.
\end{enumerate}
The standard representation for the previous set partition $\Pi$ is seen to be $351/2/6894/7$. Next, we simply remove all the slashes to get its image permutation $\pi=\varphi(\Pi)$. Clearly, the descents of $\pi$ are precisely between the penultimate and the ending elements within each non-singleton block of $\Pi$ (in its standard representation). This not only implies that $\pi$ is $\underline{32}1$-avoiding, but also reveals the relation $\des(\pi)=\blk(\Pi)-\sing(\Pi)$. To see that $\varphi$ is indeed invertible, note that one can recover the preimage $\varphi^{-1}(\pi)$ in its standard representation by inserting a slash after every right-to-left minimum of $\pi$. 

The proof of \eqref{id:gf213 in setpar} is in the same vein but with the following conditions on \textit{another standard representation} of set partitions:
\begin{enumerate}[label=(\roman*)]
    \item The elements in each block are written in increasing order.
    \item The blocks are written in decreasing order of their last (also largest) element with slashes separating the blocks. \qedhere
\end{enumerate}
\end{proof}

For the remaining pair of patterns $\underline{13}2$ and $\underline{31}2 = (\underline{13}2)^c$, the Structure Theorem is inapplicable, as neither of them ends with the largest element. Moreover, it was already noted in \cite[Lem.\ 2]{cla01} (see also \cite[Lem.\ 2.8]{FTHZ19}) that \begin{align*} \S_n(\underline{13}2) = \S_n(132), \quad \S_n(\underline{31}2) = \S_n(312), \ \S_n(2\underline{13}) = \S_n(213), \quad \S_n(2\underline{31}) = \S_n(231). \end{align*} Hence, all of them are enumerated by the {\em Catalan numbers} $C_n = \frac{1}{n+1}\binom{2n}{n}$. Based on these observations, we instead consider the ordinary generating function $B^{\omega}(x,t) = \sum_{n \geq 0} \sum_{\pi \in \S_n(\omega)} t^{\des(\pi)} x^n$ for the last two patterns. The relation in Proposition~\ref{omega--omega^c} still holds, with $A^{\omega}(x,t)$ replaced by $B^{\omega}(x,t)$.

\begin{proposition} \label{13-2}
    We have
    \begin{align*}
    B^{\underline{13}2}(x,t) = B^{\underline{31}2}(x,t)= \dfrac{1+x(t-1)-\sqrt{1-2x(t+1)+x^2(t-1)^2}}{2tx}.
    \end{align*}
\end{proposition}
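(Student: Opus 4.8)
The plan is to derive a functional equation for $B^{\underline{13}2}(x,t)$ by exploiting the well-known recursive structure of $132$-avoiding permutations, since $\S_n(\underline{13}2) = \S_n(132)$ as recalled in the excerpt. Given $\pi \in \S_{n}(132)$ with $n \geq 1$, write $\pi = \pi' n \pi''$ where $n$ sits in position $i+1$. Avoidance of $132$ forces every entry of $\pi'$ to exceed every entry of $\pi''$; hence, after standardization, $\pi'$ is a $132$-avoiding permutation of $\{n-i, \dots, n-1\}$ of length $i$ and $\pi''$ is a $132$-avoiding permutation of $\{1,\dots,n-i-1\}$ of length $n-i-1$, and conversely any such pair yields a $132$-avoider. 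The crucial point is bookkeeping the descents: $\des(\pi) = \des(\pi') + \des(\pi'')$ plus the possible descent \emph{at} position $i+1$ (between $n$ and the first entry of $\pi''$) and the possible descent \emph{into} position $i+1$. Since $n$ is the maximum, the step $\pi_i \to n$ is always an ascent when $i \geq 1$, so it contributes nothing; the step $n \to \pi_{i+2}$ is a descent precisely when $\pi'' \neq \varepsilon$, i.e.\ when $n-i-1 \geq 1$. So $\des(\pi) = \des(\pi') + \des(\pi'') + [\,n-i-1 \geq 1\,]$.

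Translating this into generating functions, I would set $B = B^{\underline{13}2}(x,t)$ and split the sum over $i$ according to whether the right block $\pi''$ is empty or not. The empty-right-block terms contribute $x \cdot B$ (one choice of $\pi' \in \S_i(132)$ for each $i \geq 0$, prepending $n$, no new descent, one factor of $x$), and the nonempty-right-block terms contribute $t x \cdot B \cdot (B - 1)$ (a factor $t$ for the forced descent $n \to \pi''_1$, a factor $B$ for $\pi'$, a factor $B-1$ for the nonempty $\pi''$, and one factor of $x$ for the entry $n$). Adding the length-$0$ term gives the quadratic
\begin{equation*}
B = 1 + xB + txB(B-1) = 1 + x\bigl(1 + t(B-1)\bigr)B.
\end{equation*}
Rearranging, $t x B^2 + \bigl(x(1-t) - 1\bigr)B + 1 = 0$, and solving by the quadratic formula, choosing the root with $B(0,t) = 1$, yields
\begin{equation*}
B = \frac{1 - x(1-t) - \sqrt{\bigl(x(1-t)-1\bigr)^2 - 4tx}}{2tx} = \frac{1 + x(t-1) - \sqrt{1 - 2x(t+1) + x^2(t-1)^2}}{2tx},
\end{equation*}
after expanding $\bigl(x(1-t)-1\bigr)^2 - 4tx = 1 - 2x(t+1) + x^2(t-1)^2$. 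The equality $B^{\underline{31}2}(x,t) = B^{\underline{13}2}(x,t)$ then follows from Proposition~\ref{omega--omega^c} (in its $B$-version, as noted before the statement): since $\underline{31}2 = (\underline{13}2)^c$, one has $B^{\underline{31}2}(x,t) = \tfrac{1}{t}(B^{\underline{13}2}(xt,\tfrac{1}{t}) - 1) + 1$, and a direct substitution into the closed form above shows it is fixed by this transformation; alternatively, one notes the radicand is already invariant under $(x,t) \mapsto (xt, 1/t)$ up to the prefactor.

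The main obstacle is getting the descent bookkeeping exactly right at the junction around the entry $n$ — in particular correctly identifying that the left-to-$n$ step never contributes a descent while the $n$-to-right step contributes one iff the right block is nonempty — and then verifying that the quadratic formula's sign choice produces the branch analytic at $x = 0$ with constant term $1$ (checking, say, the coefficient of $x^1$ equals $1$ and of $x^2$ equals $1 + t$, matching $\S_2 = \{12, 21\}$). The algebraic simplification of the discriminant and the verification of the complementation symmetry are routine. One could equally well run the same decomposition argument directly on $\S_n(\underline{31}2) = \S_n(312)$, peeling off the position of the entry $1$ (the minimum), which gives the mirror-image recursion and the same quadratic, providing an independent check.
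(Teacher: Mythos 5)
Your proposal is correct, but it takes a genuinely different route from the paper. The paper does not derive a functional equation at all: it first observes $\underline{31}2 \overset{\des}{\sim} 2\underline{31}$ via the reverse--complement symmetry (Proposition~\ref{omega--omega^rc}), identifies $\S_n(2\underline{31})=\S_n(231)$, and then simply quotes the known ordinary generating function of the Narayana polynomials $B^{231}(x,t)$ from the literature; the formula for $\underline{13}2$ then follows from Proposition~\ref{omega--omega^c} or, alternatively, from the palindromicity $N_{n,k}=N_{n,n-1-k}$ together with $\des(\pi^r)=n-1-\des(\pi)$. You instead work directly with $\S_n(\underline{13}2)=\S_n(132)$, decompose at the maximum entry, track descents across the junction (your bookkeeping $\des(\pi)=\des(\pi')+\des(\pi'')+[\pi''\neq\varepsilon]$ is right, and the resulting quadratic $txB^2+(x(1-t)-1)B+1=0$ checks out against $n=1,2,3$), and solve it, choosing the branch finite at $x=0$. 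Your argument is self-contained and in effect reproves the Narayana generating function, whereas the paper's is a two-line reduction to a textbook fact; both finish with the same complement transformation, and your verification that the closed form is fixed under $(x,t)\mapsto(xt,1/t)$ followed by $\frac{1}{t}(\,\cdot\,-1)+1$ is correct. The only cosmetic caveat is your parenthetical ``up to the prefactor'' remark, which is vaguer than the direct substitution you already describe; the substitution itself is the clean way to close that step.
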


\begin{proof}
By Proposition~\ref{omega--omega^rc}, we know that $\underline{31}2 \overset{\des}{\sim} 2\underline{31}$. Now the descent distribution over $\S_n(2\underline{31})=\S_n(231)$ generates the {\em Narayana polynomial} $N_n(t):=\sum_{0\le k\le n-1}N_{n,k}t^k$, where
$$N_{n,k}:=|\{\pi\in \S_n(231):\des(\pi)=k\}|=\frac{1}{k+1}\binom{n}{k}\binom{n-1}{k}$$
is the \textit{Narayana number}. The ordinary generating function of Narayana polynomials $N_n(t)$ is known to be (for instance, see \cite[Sect.~2.3]{pet15})
$$B^{231}(x,t)=\dfrac{1+x(t-1)-\sqrt{1-2x(t+1)+x^2(t-1)^2}}{2tx},$$
which gives the desired expression for $B^{\underline{31}2}(x,t)$. To derive the same formula for $B^{\underline{13}2}(x,t)$, we simply apply Proposition~\ref{omega--omega^c} and perform the derivations. Alternatively, it suffices to note the palindromicity of Narayana numbers (i.e., $N_{n,k}=N_{n,n-1-k}$) and the relation $\des(\pi^r)=n-1-\des(\pi)$ that holds for all $\pi\in\S_n$.
\end{proof}


\section{des-Wilf classification of length 4 quasi-consecutive patterns} \label{sec:classifications of 4}

In this section, we present a complete $\des$-Wilf equivalence classification for quasi-consecutive patterns of length four. As a motivational background, we comment briefly on the Wilf equivalence classification for quasi-consecutive patterns of length four. In total there are $4!=24$ patterns. Upon taking complement, only half of them need to be considered. It was already conceived by Baxter and Pudwell \cite[Table 6]{BP12} that these twelve patterns split into five Wilf equivalent classes. Their enumeration sequences have all been registered on the OEIS (see the last column in Table~\ref{tab: classifications of length 4}). Many of these equivalences could be easily deduced from Theorem~\ref{thm:eli-kit}, with the remaining ones conjectured in \cite[Conj.~17]{BP12} and later confirmed by Baxter and Shattuck~\cite{BS15}. In fact, almost all length four vincular patterns (not only the quasi-consecutive ones) have been classified according to the Wilf equivalence, with two pairs
$$\underline{23}14\sim 1\underline{23}4,\text{ and } \underline{14}23\sim 2\underline{14}3$$
raised as a conjecture in \cite{BS15}. Solving them would then complete the Wilf equivalence classification for all length four vincular patterns.

When we take the descent number into consideration, numerical data (see Table~\ref{tab: descent vector of length 4}) seems to suggest that there are fourteen different classes of quasi-consecutive patterns. The \textit{descent vector} of pattern $\omega$ at level $n$ is defined as $(a_{n,0}^{\omega},a_{n,1}^{\omega},\ldots,a_{n,n-1}^{\omega})$
where $a_{n,k}^{\omega}=|\{\pi\in \S_n(\omega):\des(\pi)=k\}|$. The different descent vectors shown in the third column of Table~\ref{tab: descent vector of length 4} make it evident that there should be at least fourteen distinctive classes. To show that there are indeed fourteen classes, we are going to construct descent preserving bijections for the patterns contained in the same class, ranging from class No.~1 to class No.~6. For the singleton patterns $\underline{123}4$ and $\underline{321}4$, we apply the generalized run theorem (see Theorem~\ref{thm:gen run thm}) to compute their generating functions in the next section. All these results are summarized in Table~\ref{tab: classifications of length 4}. Also note that due to Proposition~\ref{omega--omega^c}, only half of the patterns need to be considered (one out of each complementary pair). So for instance, we shall construct a bijection between $\S_n(\underline{134}2)$ and $\S_n(\underline{124}3)$ to explain the $\des$-Wilf equivalence between these two patterns in class No.~5, but omit the result for the two patterns in class No.~6 since $\underline{431}2=(\underline{124}3)^c$ and $\underline{421}3=(\underline{134}2)^c$. This also explains why we only include seven classes in Table~\ref{tab: classifications of length 4}.

\begin{table}[h!]
    \setlength{\tabcolsep}{15pt}
    \renewcommand{\arraystretch}{1.2}
    \begin{center}
      \begin{tabular}{c|c|c} 
        Class No. & Pattern $\sigma j$ & Descent vector of $\sigma j$ for $n=7$ \\
        \hline
        1 & \underline{312}4,\ \underline{231}4,\ \underline{241}3  & 1,\ 78,\ 724,\ 1706,\ 1041,\ 120,\ 1 \\
        2 & \underline{213}4,\ \underline{132}4,\ \underline{142}3  & 1,\ 110,\ 894,\ 1648,\ 897,\ 120,\ 1 \\
        3 & \underline{243}1,\ \underline{324}1,\ \underline{314}2  & 1,\ 120,\ 1041,\ 1706,\ 724,\ 78,\ 1 \\
        4 & \underline{342}1,\ \underline{423}1,\ \underline{413}2  & 1,\ 120,\ 897,\ 1648,\ 894,\ 110,\ 1 \\
        5 & \underline{134}2,\ \underline{124}3  & 1,\ 56,\ 637,\ 1756,\ 1089,\ 120,\ 1 \\
        6 & \underline{431}2,\ \underline{421}3  & 1,\ 120,\ 1089,\ 1756,\ 637,\ 56,\ 1 \\ 
        7 & $\underline{123}4$ & 0,\ 0,\ 481,\ 2022,\ 1191,\ 120,\ 1  \\ 
        8 & $\underline{321}4$ & 1,\ 120,\ 1119,\ 1853,\ 665,\ 56,\ 1   \\
        9 & $\underline{214}3$ & 1,\ 120,\ 1080,\ 1740,\ 639,\ 56,\ 1   \\ 
        10 & $\underline{412}3$ & 1,\ 56,\ 632,\ 1732,\ 1080,\ 120,\ 1   \\
        11 & $\underline{143}2$ & 1,\ 120,\ 1080,\ 1732,\ 632,\ 56,\ 1   \\ 
        12 & $\underline{341}2$ & 1,\ 56,\ 639,\ 1740,\ 1080,\ 120,\ 1   \\ 
        13 & $\underline{234}1$ & 1,\ 56,\ 665,\ 1853,\ 1119,\ 120,\ 1   \\
        14 & $\underline{432}1$ & 1,\ 120,\ 1191,\ 2022,\ 481,\ 0,\ 0   
      \end{tabular}
    \end{center}
  \caption{Fourteen $\des$-Wilf equivalence classes with their descent vectors for $n=7$}
  \label{tab: descent vector of length 4}
\end{table}

For certain given patterns $\alpha$ and $\beta$, we divide $\S_n$ into four disjoint subsets:
\begin{align*}
  \S_n(\alpha,\beta)&:=\S_n(\alpha) \bigcap \S_n(\beta), \\
  \S_n(\alpha,\check{\beta})&:=\{\pi \in \S_n(\alpha): \beta\text{ occurs in }\pi\} ,\\
  \S_n(\check{\alpha},\beta)&:=\{\pi \in \S_n(\beta): \alpha\text{ occurs in }\pi\} ,\\
  \S_n(\check{\alpha},\check{\beta})&:=\{\pi \in \S_n:\text{both }\alpha\text{ and }\beta\text{ occur in }\pi\} . 
\end{align*}

When constructing a bijection between, say $\S_n(\alpha)$ and $\S_n(\beta)$, we usually set the permutations in $\S_n(\alpha,\beta)$ to be fixed points, and try to find a $\des$-preserving bijection between $\S_n(\alpha,\check{\beta})$ and $\S_n(\check{\alpha},\beta)$, then $\alpha \overset{\des}{\sim }\beta$ follows immediately.

We note that a bijective proof of $\underline{312}4 \overset{\des}{\sim }\underline{231}4\overset{\des}{\sim }\underline{241}3$ has already appeared in \cite{BDGZ19}. It was proved via bijections constructed on the complement sets of pattern avoiding permutations. Their proof of $\underline{312}4 \overset{\des}{\sim }\underline{231}4$ goes roughly as follows: if $\pi \in \S_n(\widecheck{\underline{312}4},\widecheck{\underline{231}4})$,
then $\pi$ is a fixed point; otherwise they find a special occurrence of pattern $\underline{312}4$ in $\pi$, change it to an occurrence of $\underline{231}4$, claiming that this results in a bijection from $\S_n(\widecheck{\underline{312}4},\underline{231}4)$ to $\S_n(\underline{312}4,\widecheck{\underline{231}4})$. But they overlooked the case when there are two or more occurrences of pattern $\underline{312}4$ in $\pi$. In that case, via the bijection, say $\phi$ in \cite{BDGZ19}, $\phi(\pi)$ actually falls in $\S_n(\widecheck{\underline{312}4},\widecheck{\underline{231}4})$. Hence the above proof found in \cite{BDGZ19} is flawed. Their proof of $\underline{231}4\overset{\des}{\sim }\underline{241}3$ is incorrect for the same reason. Moreover, it is worth noting that the Wilf equivalences $\underline{231}4\sim\underline{241}3$, $\underline{132}4\sim\underline{142}3$, and $\underline{134}2\sim\underline{124}3$ are covered by Baxter and Shattuck's general equivalence results in \cite[Thm.~6 and Thm.~9]{BS15}, and it was remarked there that their proof actually shows the stronger $\des$-Wilf equivalences. Nevertheless, comparing to their inductive approach, our bijective proofs below are described via certain algorithms and seem to be more explicit.

Before we get into the bijective proofs, we would like to point out that $\underline{312} \overset{\des}{\sim }\underline{231} $ by Proposition~\ref{omega--omega^rc}, then we can apply Corollary~\ref{col:structure} to deduce that $\underline{312}4 \overset{\des}{\sim }\underline{231}4$. A similar argument leads to $\underline{213}4 \overset{\des}{\sim }\underline{132}4$. Alternatively, we present the following bijective approach. 

\begin{table}[h!]
\setlength{\tabcolsep}{9pt}  
\renewcommand{\arraystretch}{1.2}  
 \begin{center}
 \begin{tabular}{c|c|c|c} 
 Pattern $\sigma j$ & Bijections & $A^{\sigma}(x,t)$ & OEIS entry\\ \hline
 \underline{312}4,\ \underline{231}4,\ \underline{241}3 & Theorems~\ref{312-4--231-4} and \ref{231-4--241-3} & Corollary~\ref{231} & A071075 \\
 \underline{213}4,\ \underline{132}4,\ \underline{142}3 & Theorems~\ref{213-4--132-4} and \ref{132-4--142-3} & Proposition~\ref{132} & A071075\\
 \underline{134}2,\ \underline{124}3 & Theorem~\ref{134-2--124-3} & ? & A200403 \\
 $\underline{123}4$ & N/A & \eqref{Sn(123)des} & A071076 \\ 
 $\underline{321}4$ & N/A & \eqref{Sn321des} & A071076 \\
 $\underline{214}3$ & N/A & ? & A200405 \\ 
 $\underline{412}3$ & N/A & ? & A200404
 \end{tabular}
 \end{center}
 \caption{Classifications of $\sigma j$ of length $4$}
 \label{tab: classifications of length 4}
\end{table}

\begin{theorem}\label{312-4--231-4}
  We have the $\des$-Wilf equivalence $\underline{312}4 \overset{\des}{\sim }\underline{231}4$. 
\end{theorem}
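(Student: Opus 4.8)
The plan is to produce an explicit $\des$-preserving involution $\Phi$ on $\S_n$ that interchanges the classes $\S_n(\underline{312}4)$ and $\S_n(\underline{231}4)$, built by recursion on $n$ from the block decomposition around the largest letter that already appears in the proof of the Structure Theorem. For $n\ge 1$, write $\pi\in\S_n$ uniquely as $\pi=\pi'\,n\,\pi''$, where $n$ sits in position $i+1$ with $i=|\pi'|\in\{0,\dots,n-1\}$, the prefix $\pi'$ is an arrangement of a value set $S'$ of size $i$, and the suffix $\pi''$ is an arrangement of the complementary set $S''$.

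\textbf{Step 1 (a non-straddling lemma).} I would first show that no occurrence of $\underline{312}4$ or of $\underline{231}4$ in $\pi$ can involve the letter $n$ among its three underlined positions: $n$ is too big to be the ``$1$'' or the ``$2$'', and if it were the largest of the three underlined letters then the trailing ``$4$'' would require a still larger entry to its right, which is impossible. Consequently the three underlined letters of any such occurrence lie entirely to one side of $n$, so that $\pi$ avoids $\underline{312}4$ if and only if the prefix $\pi'$ avoids the consecutive pattern $\underline{312}$ and the suffix $\pi''$ avoids $\underline{312}4$; the identical statement holds with $\underline{312}$ replaced by $\underline{231}$ throughout. (Only the ``$\Rightarrow$'' direction is work: any occurrence either lies inside $\pi''$, or has its underlined triple inside $\pi'$, which forces the triple to be an occurrence of $\underline{312}$, respectively $\underline{231}$, in $\pi'$.)

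\textbf{Step 2 (the recursion and descent count).} Since $(\underline{312})^{rc}=\underline{231}$, the reverse-complement map $\tau\mapsto\tau^{rc}$ restricts to a bijection between the $\underline{312}$-avoiding and the $\underline{231}$-avoiding arrangements of any fixed $i$-element set (transport $\phi_c\circ\phi_r$ along the order isomorphism with $[i]$), and it preserves $\des$ because $\des(w^{rc})=\des(w)$, as observed in the proof of Proposition~\ref{omega--omega^rc}; call this map $\psi_{S'}$. Now set $\Phi(\varepsilon)=\varepsilon$ and, for $n\ge1$,
\[
\Phi(\pi):=\psi_{S'}(\pi')\,n\,\Phi(\pi''),
\]
where $\Phi$ acts on the suffix through standardization onto $[\,|S''|\,]$ and relabelling back onto $S''$ (legitimate since all relevant patterns and statistics depend only on relative order). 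By Step~1 this sends $\S_n(\underline{312}4)$ into $\S_n(\underline{231}4)$; it is a bijection, indeed an involution, since $\psi_{S'}$ is an involution, the cut position of $n$ is preserved, and the recursion is applied to a strictly shorter word. Finally, from $\des(\pi)=\des(\pi')+\des(\pi'')+\mathbf{1}[\pi''\neq\varepsilon]$ — the extra descent being the drop from $n$ to the first letter of $\pi''$, while the last letter of $\pi'$ rises to $n$ — together with $\des(\psi_{S'}(\pi'))=\des(\pi')$, the inductive hypothesis $\des(\Phi(\pi''))=\des(\pi'')$, and the equivalence $\Phi(\pi'')=\varepsilon\iff\pi''=\varepsilon$, one gets $\des(\Phi(\pi))=\des(\pi)$. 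Hence $\Phi$ witnesses $\underline{312}4\overset{\des}{\sim}\underline{231}4$.

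\textbf{Obstacle and remarks.} No step is computationally heavy; the subtle point is Step~1, where one must check every role the maximum could play and then confirm the avoidance ``if and only if'' in both directions, and one must be careful that the recursive clause ``apply $\Phi$ to the suffix'' is well posed, i.e.\ commutes with standardization, so that the recursion terminates and reverses. The degenerate cases $\pi'=\varepsilon$ and $\pi''=\varepsilon$ in the descent bookkeeping deserve a separate line. I would point out that this is simply the bijective incarnation of the remark preceding the theorem — $\underline{312}\overset{\des}{\sim}\underline{231}$ by Proposition~\ref{omega--omega^rc}, hence $\underline{312}4\overset{\des}{\sim}\underline{231}4$ by Corollary~\ref{col:structure} — made explicit. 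An alternative that fits the four-subsets framework set up above would fix the permutations avoiding both patterns and construct a $\des$-preserving bijection between $\S_n(\underline{312}4,\widecheck{\underline{231}4})$ and $\S_n(\widecheck{\underline{312}4},\underline{231}4)$ by rewriting occurrences one at a time; there the genuinely delicate issue — and precisely the gap in the argument of \cite{BDGZ19} — is controlling the result when several occurrences coexist, which the recursion above avoids automatically.
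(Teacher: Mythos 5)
Your proof is correct, but it takes a genuinely different route from the one in the paper. The paper fixes every permutation in $\S_n(\underline{312}4,\underline{231}4)$ and builds an explicit map $f:\S_n(\widecheck{\underline{312}4},\underline{231}4)\to\S_n(\underline{312}4,\widecheck{\underline{231}4})$: it greedily selects, from left to right, the maximal elements $\pi_{j_1}>\cdots>\pi_{j_l}$ playing the role of ``$4$'' in occurrences of $\underline{312}4$, brackets each by the nearest larger entry $\pi_{i_k}$ to its left, and applies reverse--complement to the standardization of each enclosed subword $\pi_{i_k+1}\cdots\pi_{j_k-1}$; the delicate point there is invertibility in the presence of several occurrences, which is exactly where the argument of \cite{BDGZ19} broke down. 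You instead recurse on the position of the maximum, reusing the decomposition $\pi=\pi'\,n\,\pi''$ from the proof of Theorem~\ref{structure thm}, and apply reverse--complement only to the prefix. Your non-straddling lemma is sound ($n$ can only play the role of the trailing ``$4$'', so the underlined triple never crosses it, and a consecutive $\underline{312}$ in $\pi'$ automatically completes to a $\underline{312}4$ using $n$), the bookkeeping $\des(\pi)=\des(\pi')+\des(\pi'')+\mathbf{1}[\pi''\neq\varepsilon]$ is right, and invertibility is automatic since $\Phi$ is an involution. As you observe, this is the bijective incarnation of the remark the authors make just before the theorem ($\underline{312}\overset{\des}{\sim}\underline{231}$ by Proposition~\ref{omega--omega^rc} together with Corollary~\ref{col:structure}). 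What the paper's more intricate bijection buys is that it acts only on the two ``checked'' sets and fixes the doubly-avoiding permutations pointwise (your $\Phi$ does not: e.g.\ $\Phi(1324)=2134$, both avoiding both patterns), which makes it a refinement in the four-subset framework the authors set up; what your construction buys is brevity and a proof in which the multiple-occurrence pitfall simply cannot arise.
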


\begin{proof}
  It suffices to construct a $\des$-preserving bijection 
  $$f: \S_n(\widecheck{\underline{312}4},\underline{231}4) \to \S_n(\underline{312}4,\widecheck{\underline{231}4}).$$
  Suppose $\pi\in \S_n(\widecheck{\underline{312}4},\underline{231}4)$. 
  We first find all occurrences of $\underline{312}4$ in $\pi$, and
  let $A^{\pi}$ be the set of the elements playing the role of $``4"$ in all occurrences of $\underline{312}4$ in $\pi$. Select the largest element in $A^{\pi}$, say $\pi_{j_1}$ for a certain $j_1$.
  Next find all occurrences of $\underline{312}4$ in $\pi^1:=\pi_{j_1+1} \pi_{j_1+2}\cdots \pi_n$ if any. Let $A^{\pi^1}$ be the set of the elements contained in $\pi^1$ playing the role of $``4"$ in any occurrence of $\underline{312}4$ in $\pi^1$, select the largest element in $A^{\pi^1}$, say $\pi_{j_2}$ for a certain $j_2$. Keep doing this until we arrive at a certain $\pi_{j_l}$ so that the suffix $\pi_{j_l+1}\pi_{j_l+2}\cdots \pi_{n}$ is $\underline{312}4$-avoiding. It follows that $\pi_{j_1}>\pi_{j_2}>\cdots>\pi_{j_l}$ and $j_1<j_2<\cdots<j_l$.
  For instance, if $\pi=867953124$, then $\pi_4=9$ and $\pi_9=4$ would be selected with $j_1=4$ and $j_2=9$.

  Next for all $k,\; 1\le k\le l$, we find $\pi_{i_k}$, which is the rightmost element to the left of $\pi_{j_k}$ such that $\pi_{i_k}>\pi_{j_k}$. For $1<k\le l$, such a $\pi_{i_k}$ always exists since at least we have $\pi_{j_{k-1}}>\pi_{j_k}$. If such a $\pi_{i_1}$ does not exist, then we set $i_1=0$ and $\pi_0=\infty$. This happens when $\pi_{j_1}$ is a left-to-right maximum. 
  Figure~\ref{fig:312-4} illustrates the relations of elements in $\pi$. 
  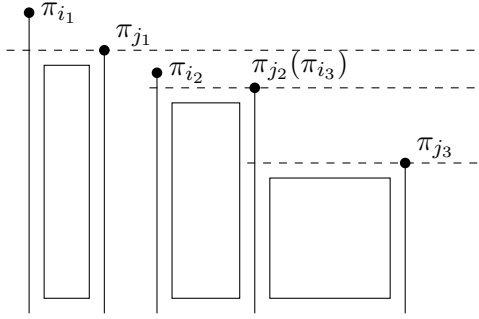
\begin{figure}[ht]
    \centering
    \begin{tikzpicture}
      \draw (0,0)--(0,4);
      \fill (0,4) circle[radius=2pt];
      \node at (0.4,4) {$\pi_{i_1}$};
      \draw (1,0)--(1,3.5);
      \fill (1,3.5) circle[radius=2pt];
      \node at (1.4,3.7) {$\pi_{j_1}$};
      \draw[dashed] (-0.3,3.5) -- (6,3.5);
      \draw (0.2,3.3) rectangle (0.8,0.2);

      \draw (1.7,0)--(1.7,3.2);
      \fill (1.7,3.2) circle[radius=2pt];
      \node at (2.1,3.2) {$\pi_{i_2}$};
      \draw (3,0)--(3,3);
      \fill (3,3) circle[radius=2pt];
      \node at (3.6,3.3) {$\pi_{j_2}(\pi_{i_3})$};
      \draw[dashed] (1.6,3) -- (6,3);
      \draw (1.9,2.8) rectangle (2.8,0.2);

      \draw (5,0)--(5,2);
      \fill (5,2) circle[radius=2pt];
      \node at (5.4,2.2) {$\pi_{j_3}$};
      \draw[dashed] (2.9,2) -- (6,2);
      \draw (3.2,1.8) rectangle (4.8,0.2);
    \end{tikzpicture}
    \caption{An illustration of choosing $\pi_{j_k}$ and $\pi_{i_k}$, $1\le k\le l$}
    \label{fig:312-4}
  \end{figure}

  The choices of $\pi_{i_k},\pi_{j_k}(1\le k \le l)$ imply that,  
  for all $1\le k \le l$, 
  \begin{enumerate}[label=(\roman*)]
      \item there is no $\underline{231}4$ pattern in $\pi_1\pi_2\cdots \pi_{n}$;
      \item $\pi_{i_k+1},\pi_{i_k+2},\cdots ,\pi_{j_k-1}<\pi_{j_k}<\pi_{i_k}$;
      \item there is no $\underline{312}4$ pattern in $\pi_{j_{k-1}+1}\pi_{j_{k-1}+2}\cdots \pi_{i_{k}}(j_{0}:=0)$;
      \item there must be a $\underline{312}4$ pattern in $\pi_{i_k+1} \pi_{i_k+2}\cdots \pi_{j_k}$;
      \item $j_k-i_k\ge 4$.
  \end{enumerate}
  Now we obtain the image permutation $f(\pi)$ from $\pi$ by keeping the positions and values of both $\pi_{i_k}$ and $\pi_{j_k}$, while transforming the subword $\pi_{i_k+1}\pi_{i_k+2}\cdots \pi_{j_{k}-1}$ to $\pi'_{i_k+1}\pi'_{i_k+2}\cdots \pi'_{j_k-1}$, for all $1\le k\le l$, where
  $\pi'_{i_k+1}\pi'_{i_k+2}\cdots \pi'_{j_k-1}$ is the permutation on
  $\{\pi_{i_k+1},\pi_{i_k+2},\cdots, \pi_{j_k-1}\}$ such that
  $$\std(\pi'_{i_k+1}\pi'_{i_k+2}\cdots \pi'_{j_k-1})=(\std (\pi_{i_k+1}\pi_{i_k+2}\cdots \pi_{j_k-1}))^{rc}.$$
  For instance, $f(867953124)=786952314$. Moreover, one should be able to verify the following facts for all $1\le k \le l$.
  \begin{enumerate}[label=(\roman*)]
      \item $\des(\pi)=\des(f(\pi))$ since $\pi_{i_k}$ and $\pi_{j_k}$ are greater than 
      the elements in \\ $\{\pi_{i_k+1},\pi_{i_k+2},\cdots, \pi_{j_k-1}\}$, and the composition $\phi_r\circ\phi_c$ preserves the descent number.
      \item all occurrences of $\underline{312}4$ in $\pi_{i_k+1} \pi_{i_k+2}\cdots \pi_{j_k}$ become occurrences of $\underline{231}4$ in $\pi_{i_k+1}' \pi_{i_k+2}'\cdots \pi_{j_k}'$.
      \item there is no $\underline{312}4$ pattern in $f(\pi)$.
      \item $\pi_{j_k}=f(\pi)_{j_k}$ is also the largest element of $B^{f(\pi)^{k-1}}$, where
      $B^{f(\pi)^{k-1}}$ is        
      the set of elements playing $``4"$ in each occurrence of $\underline{231}4$ in
      $$f(\pi)_{j_{k-1}+1}f(\pi)_{j_{k-1}+2}\cdots f(\pi)_{i_k} f(\pi)_{i_k+1}\cdots f(\pi)_{j_k-1} f(\pi)_{j_k}\cdots f(\pi)_n,\quad (f(\pi)^0:=f(\pi)).
      $$
      Moreover, $A^{\pi^k}=B^{f(\pi)^{k}}$ for all $0\le k \le l-1$ $(\pi^0=\pi,f(\pi)^{0}=f(\pi))$.
      \item there is no $\underline{231}4$ pattern in $\pi_{j_{k-1}+1}\pi_{j_{k-1}+2}\cdots \pi_{i_{k}}$.
  \end{enumerate}
  Therefore, $f(\pi)\in \S_n(\underline{312}4,\widecheck{\underline{231}4})$.

  To show that $f$ is invertible, the key thing to notice is that $f$ keeps the ``maximality" of $\pi_{j_k}(1\le k \le l)$. The inverse mapping $f^{-1}$ is constructed similarly as $f$, except now finding $\pi_{j_k}$ and $\pi_{i_k}$ is with respect to the pattern $\underline{231}4$,
  and we implement the same transformation on the subword $\pi_{i_k+1}\pi_{i_k+2}\cdots \pi_{j_{k}-1}$.
\end{proof}

\begin{remark}
  The bijection $f$ constructed in Theorem~\ref{312-4--231-4} also preserves the statistic $\inv $,
  i.e., $\inv (\pi)=\inv (f(\pi))$ for $\pi \in \S_n(\widecheck{\underline{312}4},\underline{231}4)$ since the composition $\phi_r\circ\phi_c$ preserves the number (and positions) of the inversion pairs, although their values are usually changed.
\end{remark}

We introduce two algorithms here to facilitate our proofs of the other equivalence classes.
Given a letter $a$ and a word $w$ so that $a$ is not contained in $w$.
The first algorithm \textit{REPLACEMENT-I} proceeds as follows.
\begin{enumerate}[label=(\arabic*)]
    \item Start with a pair $(a,w)$.
    \item \textbf{While} $a$ is larger than some element in $w$, \textbf{do} \label{Re2}
    \begin{enumerate}[label=(2.\arabic*)]
        \item Let $y$ be the largest element in $w$ smaller than $a$ and replace $y$ by $a$, get new $w$. \label{Re2.1}
        \item Set $a:=y$.
    \end{enumerate}
    \item Get new pair $(a,w)$ and \textbf{stop}.
\end{enumerate}
The \textit{REPLACEMENT-II} algorithm proceeds as follows.
\begin{enumerate}[label=(\arabic*)]
    \item Start with a pair $(a,w)$.
    \item \textbf{While} $a$ is smaller than some element in $w$, \textbf{do} \label{Re2-}
    \begin{enumerate}[label=(2.\arabic*)]
        \item Let $y$ be the smallest element in $w$ larger than $a$ and replace $y$ by $a$, get new $w$. \label{Re2.1-}
        \item Set $a:=y$.
    \end{enumerate}
    \item Get new pair $(a,w)$ and \textbf{stop}.
\end{enumerate}

\begin{proposition}
    When the pair $(a,w)$ is inputted, let $(a',w')$ and $(a'',w'')$ be the output pair after applying the \textit{REPLACEMENT-I} and \textit{REPLACEMENT-II} algorithms respectively, then 
    \begin{enumerate} 
        \item $a'$ is smaller than every element of $w'$, $a''$ is larger than every element of $w''$; 
        \item $\std(w')=\std(w)=\std(w'')$;
        \item $\des(w')=\des(w)=\des(w'')$.
    \end{enumerate}
\end{proposition}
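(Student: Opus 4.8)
The plan is to analyze the two algorithms by tracking what happens to $w$ at each iteration of the \textbf{While} loop, and to argue that the loop terminates, that each individual replacement step preserves the standardization, and that the termination condition is exactly the negation of statement (1). First I would treat \textit{REPLACEMENT-I}; the argument for \textit{REPLACEMENT-II} is entirely symmetric (it is literally \textit{REPLACEMENT-I} applied after complementing all the values, so one could even invoke $\phi_c$ to deduce it, though I would probably just mirror the proof).

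For termination and statement (1): at the start of each pass through step~\ref{Re2} we check whether $a$ is larger than some element of $w$. If it is, we pick $y$, the largest element of $w$ below $a$, overwrite that occurrence with $a$, and reset $a:=y$. The key monotonicity observation is that the value of $a$ strictly decreases with each iteration (the new $a$ equals the old $y<a$), so the loop can run only finitely many times; when it halts, the \textbf{While}-condition fails, which says precisely that $a'$ is smaller than every element of $w'$. That is statement (1) for \textit{REPLACEMENT-I}, and dually for II the decreasing quantity is replaced by an increasing one bounded above by $\max w$.

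For statement (2): I claim each single execution of step~\ref{Re2.1} leaves $\std(w)$ unchanged. Before the step, the multiset of values in $w$ together with the ``floating'' letter $a$ occupies some set $S$ of $|w|+1$ distinct reals; $a$ is larger than some element of $w$, and $y$ is the largest element of $w$ still below $a$. After the step, $y$ leaves $w$ and $a$ enters $w$ in the same position, so the new value-set of $w$ is $(S\setminus\{a\})$ with $y$ removed and $a$ inserted — i.e. we have swapped out $y$ for $a$. Because $a$ and $y$ are \emph{consecutive} in $S$ (there is no element of $w$, hence no element of $S\setminus\{a,y\}$, strictly between $y$ and $a$ by the maximality choice of $y$), replacing $y$ by $a$ in the position formerly held by $y$ does not change the relative order of any pair of entries of $w$: every other entry that was below $y$ is below $a$, every other entry that was above $a$ is above $y$, and there is nothing in between. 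Hence $\std(w)$ is unchanged by this step, and by induction on the number of iterations, $\std(w')=\std(w)$. The same consecutiveness argument, with ``largest below'' replaced by ``smallest above'', gives $\std(w'')=\std(w)$.

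Statement (3) is then immediate: the descent set of a word depends only on its standardization, so $\des(w')=\des(\std(w'))=\des(\std(w))=\des(w)$, and likewise for $w''$. The only point that needs a little care — and the step I expect to be the mild obstacle — is the verification in statement (2) that $y$ and $a$ really are order-adjacent within the combined set $S$ at the moment of replacement, since this is what makes the single-step substitution ``invisible'' to the standardization; once that is nailed down the rest is bookkeeping. I would write this as one clean lemma-style paragraph for \textit{REPLACEMENT-I} and then say ``the proof for \textit{REPLACEMENT-II} is identical after exchanging the roles of $<$ and $>$'' (or invoking complementation) rather than repeating it.
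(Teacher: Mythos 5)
Your proposal is correct and follows the same route as the paper: property (1) is read off from the failure of the \textbf{While}-condition at termination, and (2)--(3) follow because each single replacement is order-preserving. The paper in fact only spells out (1) and dismisses the rest as straightforward, so your observation that $y$ and $a$ are order-adjacent in $w\cup\{a\}$ (by the maximality of $y$ below $a$), which is what makes each swap invisible to $\std$ and hence to $\des$, is exactly the omitted detail, correctly supplied.
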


\begin{proof}
  If $a'$ is larger than any element of $w'$, the step~\ref{Re2} of \textit{REPLACEMENT-I} needs to run for another round before the algorithm terminates, this proves the first property for $(a',w')$. The proofs of other properties should be straightforward and we omit them.
\end{proof}

\begin{theorem}
  We have the $\des$-Wilf equivalence $\underline{231}4 \overset{\des}{\sim }\underline{241}3$. \label{231-4--241-3}
\end{theorem}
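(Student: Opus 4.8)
The plan is to mimic the structure of the proof of Theorem~\ref{312-4--231-4} by constructing a $\des$-preserving bijection $g:\S_n(\widecheck{\underline{231}4},\underline{241}3)\to\S_n(\underline{231}4,\widecheck{\underline{241}3})$ and letting every permutation in $\S_n(\underline{231}4,\underline{241}3)$ be a fixed point. Since both $\underline{231}4$ and $\underline{241}3$ share the same ``$\sigma$'' up to the choice of the penultimate letter -- the underlined prefix is a pattern on $\{1,2,3\}$ in both cases, and in each the last letter $4$ plays the role of the overall maximum of the occurrence -- I expect that the local surgery is, just as in Theorem~\ref{312-4--231-4}, confined to a subword lying strictly between a chosen ``$\pi_{i_k}$'' and a chosen ``$\pi_{j_k}$'', where $\pi_{j_k}$ plays the role of the ``$4$'' and $\pi_{i_k}$ is the rightmost larger element to its left. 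The two offending patterns $\underline{231}$ and $\underline{241}$ differ precisely in the relative order of their middle two letters, so the transformation to apply on the inner subword should be the one swapping these roles; concretely, I would again pass to the standardization and apply a fixed involution on $\S_m$ (the analogue of $\phi_r\circ\phi_c$ here is not quite right, so I expect instead a reverse-type or a value-reversal-type map) chosen so that occurrences of $\underline{231}$ become occurrences of $\underline{241}$ and vice versa, while the descent statistic of the subword is preserved.

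The key steps, in order, would be: (1) describe the selection algorithm -- scan $\pi$ from the left, collect the set of letters serving as ``$4$'' in some occurrence of $\underline{231}4$, take the largest such, call it $\pi_{j_1}$; recurse on the suffix after position $j_1$ to produce $\pi_{j_1}>\pi_{j_2}>\cdots>\pi_{j_l}$ with $j_1<\cdots<j_l$; for each $k$ set $\pi_{i_k}$ to be the rightmost element to the left of $\pi_{j_k}$ exceeding $\pi_{j_k}$ (with the $i_1=0$, $\pi_0=\infty$ convention). (2) Record the structural facts analogous to properties (i)--(v) in the proof of Theorem~\ref{312-4--231-4}: everything strictly between $\pi_{i_k}$ and $\pi_{j_k}$ is below $\pi_{j_k}$, the blocks between consecutive distinguished positions are $\underline{231}4$-avoiding, each inner block $\pi_{i_k+1}\cdots\pi_{j_k}$ contains a $\underline{231}4$ occurrence, and the whole word is $\underline{241}3$-avoiding. (3) Define $g(\pi)$ by fixing all $\pi_{i_k}$ and $\pi_{j_k}$ in place and replacing each inner subword $\pi_{i_k+1}\cdots\pi_{j_k-1}$ by the word on the same letter set whose standardization is the chosen involution applied to the original standardization. (4) Verify: $\des$ is preserved (because $\pi_{i_k},\pi_{j_k}$ dominate the inner block so its boundary descents are untouched, and the involution preserves the descent number of the block); every $\underline{231}4$ occurrence turns into a $\underline{241}3$ occurrence and no $\underline{231}4$ remains; the maximality of each $\pi_{j_k}$ is preserved so the selection procedure for the inverse, run with respect to $\underline{241}3$, recovers the same distinguished positions; hence $g$ is a bijection onto $\S_n(\underline{231}4,\widecheck{\underline{241}3})$ with inverse built by the same recipe using the inverse involution.

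The main obstacle I anticipate is pinning down exactly which involution on $\S_m$ to apply to the inner subword so that it simultaneously (a) preserves $\des$ of that subword, and (b) converts every internal $\underline{231}$ (relative to the trailing ``$4$'', i.e.\ the element $\pi_{j_k}$ sitting just after the block) into an internal $\underline{241}$ and conversely -- these two patterns agree in their first letter and differ only by transposing the last two, so a naive middle-two swap would wreck adjacency/relative-order of other triples and would not in general be a well-defined permutation operation. I would resolve this by testing the candidate maps -- plain reversal $\phi_r$, complement $\phi_c$, and $\phi_r\circ\phi_c$ -- on small cases (the paper already gives $f(867953124)=786952314$ as a sanity template for the $\underline{312}4$--$\underline{231}4$ case), and expect the correct choice to be the one that fixes the positions of extreme elements of the block while reversing the order of the rest; a careful case analysis then confirms that an occurrence of the prefix $\underline{231}$ inside the block becomes $\underline{241}$ under this map, completing the proof. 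A secondary subtlety is checking that applying the surgery blockwise, from left to right, does not create a new occurrence of either forbidden pattern that straddles two blocks or reaches past $\pi_{j_k}$; this follows from the domination inequalities in step (2), but needs to be stated carefully.
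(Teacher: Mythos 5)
Your proposal rests on a misreading of the pattern $\underline{241}3$. You assert that in both patterns ``the last letter $4$ plays the role of the overall maximum of the occurrence''; this holds for $\underline{231}4$ but is false for $\underline{241}3$, whose trailing letter is the $3$ and whose maximum is the \emph{middle} letter of the underlined block. In fact the underlined triples of both patterns standardize to the same pattern $231$; the two patterns differ only in where the trailing, non-adjacent letter sits in value --- above the entire triple for $\underline{231}4$, strictly between the first and second letters of the triple for $\underline{241}3$. Consequently the template of Theorem~\ref{312-4--231-4} cannot be transplanted: any surgery that fixes the values of $\pi_{i_k}$ and $\pi_{j_k}$ and merely permutes the letters of the inner block $\pi_{i_k+1}\cdots\pi_{j_k-1}$ by a standardization-level involution never changes which letters of $\pi$ lie above, below, or between the letters of a given triple, and that value datum is exactly what distinguishes the two patterns. (Concretely: if $\pi_{j_k}$ exceeds every letter of the inner block, it can serve as the ``$4$'' of a $\underline{231}4$ occurrence but can never serve as the ``$3$'' of a $\underline{241}3$ occurrence built from that block.) So the obstacle you flag --- choosing the right involution among $\phi_r$, $\phi_c$, $\phi_r\circ\phi_c$ --- has no solution; no such block involution exists.

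The paper's proof uses a genuinely different mechanism: a \emph{value} surgery that keeps positions fixed but lets values migrate across the entire suffix. Starting from $\pi=a_1\cdots a_n\in\S_n(\underline{231}4,\widecheck{\underline{241}3})$, it locates the starting indices $j_1<\cdots<j_l$ of the underlined triples of the $\underline{241}3$ occurrences (noting that $\underline{231}4$-avoidance forces every letter after $a_{j_i+1}$ to be smaller than $a_{j_i+1}$), and applies a modified \textit{REPLACEMENT-I} algorithm to the pair $(a_{j_i+1},\,a_{j_i+3}\cdots a_n)$. This cyclically shifts the middle letter $a_{j_i+1}$ down through the suffix values exceeding $a_{j_i}$, so that the new middle letter becomes the smallest suffix value above $a_{j_i}$ and some later letter now exceeds it: the $2413$ occurrence becomes a $2314$ occurrence, no $2413$ survives at that index, and the standardization (hence the descent set) of the suffix is unchanged. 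Invertibility comes from running the analogous \textit{REPLACEMENT-II} cascade on the $\underline{231}4$ occurrences in the image, processed from right to left. If you want to salvage your write-up, you should discard the block-rearrangement framework and instead work out this value-cascade argument (or an equivalent one that moves values, not positions).
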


\begin{proof}
  We construct a $\des$-preserving bijection
  $$g: \S_n(\underline{231}4,\widecheck{\underline{241}3}) \to \S_n(\widecheck{\underline{231}4},\underline{241}3).$$
  Suppose $\pi:=a_1a_2\cdots a_n\in \S_n(\underline{231}4,\widecheck{\underline{241}3})$. 
  Find all occurrences of $\underline{241}3$ in $\pi$, let $\{j_1,j_2,\ldots,j_l\}$
  be its indices for some $l$, i.e., $\std(a_{j_i}a_{j_i+1}a_{j_i+2})=231$ and there exists $p_i>j_i+2$
  such that $\std(a_{j_i}a_{j_i+1}a_{j_i+2}a_{p_i})=2413$ for all $1\le i \le l$.
  Without loss of generality, set $j_1<j_2<\cdots<j_l$, then $j_{i+1}-j_i\ge 2$ for all $1\le i < l$.
  We notice that since $\pi \in \mathfrak{S}_n(\underline{231}4)$, all of the elements after $a_{j_i+1}$ are smaller than $a_{j_i+1}$ for all $1\le i \le l$. In particular, $a_{j_1+1}>a_{j_2+1}>\cdots>a_{j_l+1}$.
    
  We execute the \textit{REPLACEMENT-I} algorithm on $(a_{j_1+1},a_{j_1+3}a_{j_1+4}\cdots a_n)$, but
  \begin{itemize}
      \item \textit{REPLACEMENT-I}~\ref{Re2} is modified to be ``$a$ is larger than some element which is larger than $a_{j_1}$ in $w$'';
      \item  and add the condition ``$y>a_{j_1}$'' to \textit{REPLACEMENT-I}~\ref{Re2.1}. 
  \end{itemize}
  Denote $\pi^1:=a_1^1\cdots a_{j_1}^1a_{j_1+1}^1a_{j_1+2}^1a_{j_1+3}^1 \cdots a_n^1$, where $(a_{j_1+1}^1,a_{j_1+3}^1a_{j_1+4}^1\cdots a_n^1)$ is the output from the above modified \textit{REPLACEMENT-I} algorithm while all remaining entries are the same as in $\pi$. One verifies that 
  \begin{enumerate}[label=(\roman*)]
      \item $\std(a_{j_1}^1a_{j_1+1}^1a_{j_1+2}^1a_{p_1}^1)=2314$ for some $p_1>j_1+2$;
      \item $\std(a_{j_1+3}^1 \cdots a_n^1)=\std(a_{j_1+3} \cdots a_n)$;
      \item $\des(\pi)=\des(\pi^1)$;
      \item there exists no $\underline{231}4$ pattern before $a_{j_1}^1$ of $\pi^1$;
      \item $a_{j_1+1}^1$ is the smallest element in $a_{j_1+3}a_{j_1+4}\cdots a_n$ such that $a_{j_1}<a_{j_1+1}^1<a_{j_1+1}$;
      \item there exists no $p_1>j_1+2$ such that $\std(a_{j_1}^1a_{j_1+1}^1a_{j_1+2}^1a_{p_1}^1)=2413$;
      \item $\std(a_{j_i}^1a_{j_i+1}^1a_{j_i+2}^1)=231$ and there exists $p_i>j_i+2$ such that $\std(a_{j_i}^1a_{j_i+1}^1a_{j_i+2}^1a_{p_i}^1)=2413$ for all $2\le i\le l$.
  \end{enumerate}
  Next, consider $a_{j_2}^1 a_{j_2+1}^1 a_{j_2+2}^1$, repeat the above operations by simply replacing the subscript $j_1$ with $j_2$ and the 
  superscript $1$ with $2$. Keep doing this for $a_{j_i}^{i-1} a_{j_i+1}^{i-1} a_{j_i+2}^{i-1}$ with $i=2,3,\ldots, l$ in that order. Set the image $g(\pi):=\pi^l$, the final output permutation after applying $l$ times of modified \textit{REPLACEMENT-I} algorithm. We see that there exists no $\underline{241}3$ patterns in $g(\pi)$. More precisely, all occurrences of $\underline{241}3$ in $\pi$ become occurrences of $\underline{231}4$ in $g(\pi)$.

  For example, $\underline{362}514 \mapsto 34\underline{261}5 \mapsto 342516$, and
  $\underline{462}513 \mapsto 45\underline{261}3 \mapsto 452316$. So $g(362514)=342516$, and $g(462513)=452316$.

  The construction of the inverse mapping $g^{-1}$ is straightforward. Observe that for all occurrences of $\underline{231}4$ in $g(\pi):=b_1b_2\cdots b_n$, its set of indices is still given by $\{j_1,j_2,\ldots,j_l\}$. That is to say, $\std(b_{j_i}b_{j_i+1}b_{j_i+2})=231$ and there exists $p_i>j_i+2$
  such that $\std(b_{j_i}b_{j_i+1}b_{j_i+2}b_{p_i})=2314$ for all $1\le i \le l$.
  We execute the suitably modified \textit{REPLACEMENT-II} algorithm on $(b_{j_l+1},b_{j_l+3}b_{j_l+4}\cdots b_n)$ to generate $(b_{j_l+1}^1,b_{j_l+3}^1b_{j_l+4}^1\cdots b_n^1)$, turning the $\underline{231}4$ occurrence beginning with $b_{j_l}b_{j_l+1}b_{j_l+2}$ into a $\underline{241}3$ occurrence beginning with $b_{j_l}b_{j_l+1}^1b_{j_l+2}$. Continue the process to replace the $\underline{231}4$ occurrences that begin with positions $j_{l-1},j_{l-2},\ldots,j_1$, in that order, until we finally arrive at a permutation that is in $\S_n(\underline{231}4,\widecheck{\underline{241}3})$, and it is taken to be the preimage $g^{-1}(b_1b_2\cdots b_n)$. For example, $34\underline{251}6\mapsto  \underline{342}615 \mapsto 362514 $, and $45\underline{231}6 \mapsto  \underline{452}613 \mapsto 462513$.
\end{proof}

Table~\ref{tab:231-4--241-3} lists the example of $\underline{231}4\overset{\des}{\sim} \underline{241}3$ for $n=5$.
The permutations in the same line are in correspondence with each other under our bijection $g$.

\begin{table}[h]
\setlength{\tabcolsep}{15pt}
  \renewcommand{\arraystretch}{1.2}
  \centering
  \begin{tabular}{ccc} 
      &  $\S_n(\underline{231}4,\widecheck{\underline{241}3})$ & $\S_n(\widecheck{\underline{231}4},\underline{241}3)$ \\ \hline
       & $\underline{251}34$ & $\underline{231}45$  \\
   $\mathrm{des}=1$    & $\underline{351}24$ & $\underline{341}25$  \\
       & $2\underline{351}4$ & $2\underline{341}5$  \\
       & $1\underline{352}4$ & $1\underline{342}5$  \\\hline
       & $5\underline{241}3$ & $5\underline{231}4$   \\
       & $\underline{251}43$ &  $\underline{231}54$ \\
       &  $3\underline{251}4$ &  $3\underline{241}5$ \\
        $\mathrm{des}=2$ & $4\underline{251}3$ &  $4\underline{231}5$ \\
       & $\underline{351}42$ & $\underline{341}52$  \\
       & $\underline{352}14$ & $\underline{342}15$  \\
       & $\underline{352}41$ & $\underline{342}51$  \\      
       \hline
  \end{tabular}
  \caption{$\underline{231}4\overset{\des}{\sim} \underline{241}3$ ($n=5$)}
  \label{tab:231-4--241-3}
\end{table}

\begin{theorem} \label{213-4--132-4}
  We have the $\des$-Wilf equivalence $\underline{213}4\overset{\des}{\sim }\underline{132}4$.
\end{theorem}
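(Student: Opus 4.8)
The plan is to give an explicit bijective proof paralleling the construction of $f$ in Theorem~\ref{312-4--231-4}, exploiting the pattern identity $(\underline{213})^{rc}=\underline{132}$. (We note in passing that $\underline{213}\overset{\des}{\sim}\underline{132}$ already follows from Proposition~\ref{omega--omega^rc} since $\underline{132}=(\underline{213})^{rc}$, so Corollary~\ref{col:structure} gives the statement at once; but, in keeping with the style of this section, we produce a bijection.) Concretely, the goal is a $\des$-preserving bijection
$$h\colon \S_n(\widecheck{\underline{213}4},\underline{132}4)\longrightarrow \S_n(\underline{213}4,\widecheck{\underline{132}4}),$$
every element of $\S_n(\underline{213}4,\underline{132}4)$ being declared a fixed point; then $\underline{213}4\overset{\des}{\sim}\underline{132}4$ is immediate.

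For $\pi\in\S_n(\widecheck{\underline{213}4},\underline{132}4)$ I would extract the same nested family of pivots as for $f$: let $\pi_{j_1}$ be the largest entry playing the role of $4$ in some occurrence of $\underline{213}4$ in $\pi$; let $\pi_{j_2}$ be the largest such entry in the suffix $\pi_{j_1+1}\cdots\pi_n$; iterate until the remaining suffix avoids $\underline{213}4$, obtaining $j_1<\cdots<j_l$ with $\pi_{j_1}>\cdots>\pi_{j_l}$. For each $k$ let $\pi_{i_k}$ be the rightmost entry to the left of $\pi_{j_k}$ exceeding $\pi_{j_k}$ (set $i_1=0$, $\pi_0=\infty$ if there is none). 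As in Theorem~\ref{312-4--231-4} one checks $j_{k-1}\le i_k<j_k$, so the windows $W_k:=(\pi_{i_k+1},\ldots,\pi_{j_k-1})$ are pairwise disjoint, every entry of $W_k$ lies below $\pi_{j_k}$, and $\pi_{i_k}>\pi_{j_k}$. Define $h(\pi)$ by keeping every entry outside all $W_k$ in place and replacing, inside each $W_k$, the order type by its reverse-complement while keeping the same value set. Since $\pi_{i_k}$ and $\pi_{j_k}$ both dominate $W_k$, the two boundary comparisons at each window are unchanged, and reverse-complement preserves the internal descent count (as in Proposition~\ref{omega--omega^rc}); hence $\des(h(\pi))=\des(\pi)$.

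What remains is the package of structural facts mirroring those listed after $f$. First, by the maximality in the choice of the pivots one shows (exactly as for facts (iii)--(v) of Theorem~\ref{312-4--231-4}) that the gap $(\pi_{j_{k-1}+1},\ldots,\pi_{i_k})$ is $\underline{213}4$-free and that each $W_k$ together with $\pi_{j_k}$ contains an occurrence of $\underline{213}4$, which under $h$ becomes an occurrence of $\underline{132}4$ (because $(\underline{213})^{rc}=\underline{132}$ and $\pi_{j_k}$ still dominates $W'_k$); this gives $h(\pi)\in\widecheck{\underline{132}4}$. Next, one checks $h(\pi)$ avoids $\underline{213}4$: an occurrence sitting inside a single transformed window $W'_k$ would correspond, via reverse-complement, to a $1\underline{243}$ pattern inside $W_k$, whose consecutive $\underline{132}$-triple together with $\pi_{j_k}$ would force a $\underline{132}4$ in $\pi$ — impossible; an occurrence straddling a pivot or lying in a gap is excluded as in Theorem~\ref{312-4--231-4}; and the final $\underline{213}4$-avoiding suffix creates none. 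Finally, the pivot set of $\pi$ relative to $\underline{213}4$ equals the pivot set of $h(\pi)$ relative to $\underline{132}4$ (same indices and values), which makes $h$ invertible, the inverse being the symmetric construction with the roles of $\underline{213}$ and $\underline{132}$ exchanged and reverse-complement applied on the corresponding windows.

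I expect the only genuine obstacle to be the same bookkeeping that powers Theorem~\ref{312-4--231-4}: verifying that the selected windows account for all surviving occurrences of $\underline{213}4$, that reverse-complement creates no new occurrence of $\underline{213}4$ across a window boundary, and that the pivot structure is invariant under $h$, so that $h$ genuinely maps into $\S_n(\underline{213}4,\widecheck{\underline{132}4})$ and is a true bijection. The descent-preservation and the pattern identities ($(\underline{213})^{rc}=\underline{132}$ and $(\underline{213}4)^{rc}=1\underline{243}$) are routine; once pivot invariance is in hand, invertibility follows exactly as before.
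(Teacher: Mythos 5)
Your proposal is correct and matches the paper's argument: the paper's proof of this theorem is precisely the one-line observation that the bijection $f$ of Theorem~\ref{312-4--231-4} applies verbatim because $4$ is the largest element of $\underline{213}4$ and $(\underline{213})^{rc}=\underline{132}$, which is exactly the construction you carry out in detail. The shortcut you mention via Proposition~\ref{omega--omega^rc} and Corollary~\ref{col:structure} is also noted in the paper just before Theorem~\ref{312-4--231-4}.
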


\begin{proof}
  We can prove $\underline{213}4\overset{\des}{\sim }\underline{132}4$ similarly using 
  the bijection $f$ from Theorem~\ref{312-4--231-4}, because ``4'' is the largest element in pattern
  $\underline{213}4$ and $(213)^{rc}=132$.
\end{proof}

\begin{theorem}
  We have the $\des$-Wilf equivalence $\underline{132}4\overset{\des}{\sim }\underline{142}3$. \label{132-4--142-3}
\end{theorem}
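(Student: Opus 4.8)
The plan is to construct a $\des$-preserving bijection
$$g'\colon \S_n(\underline{132}4,\widecheck{\underline{142}3}) \longrightarrow \S_n(\widecheck{\underline{132}4},\underline{142}3),$$
after which declaring the permutations of $\S_n(\underline{132}4,\underline{142}3)$ to be fixed points yields $\underline{132}4\overset{\des}{\sim}\underline{142}3$. The map $g'$ will be modeled on the bijection $g$ from Theorem~\ref{231-4--241-3}. The starting point is the structural remark analogous to the one underlying $g$: if $\pi=a_1a_2\cdots a_n\in\S_n(\underline{132}4)$ and $a_ja_{j+1}a_{j+2}$ is a consecutive triple with $\std(a_ja_{j+1}a_{j+2})=132$, then avoidance of $\underline{132}4$ forces every entry in positions $j+3,\dots,n$ to be smaller than $a_{j+1}$. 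Consequently, writing $\{j_1<j_2<\cdots<j_l\}$ for the indices of the occurrences of $\underline{142}3$ in $\pi$ (so $\std(a_{j_i}a_{j_i+1}a_{j_i+2})=132$ and some $p_i>j_i+2$ satisfies $\std(a_{j_i}a_{j_i+1}a_{j_i+2}a_{p_i})=1423$), one gets $j_{i+1}-j_i\ge2$ and the chain $a_{j_1+1}>a_{j_2+1}>\cdots>a_{j_l+1}$, precisely as in the $\underline{231}4$ situation.

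Next I would process the occurrences in the order $j_1,j_2,\dots,j_l$. At stage $i$, run the \textit{REPLACEMENT-I} algorithm on the pair $(a_{j_i+1},\ a_{j_i+3}a_{j_i+4}\cdots a_n)$ formed from the current permutation, modified just as in Theorem~\ref{231-4--241-3} but with the threshold $a_{j_i}$ there replaced by $a_{j_i+2}$ here: step~\ref{Re2} becomes ``\textbf{while} $a$ is larger than some element of $w$ that is larger than $a_{j_i+2}$'', and the clause ``$y>a_{j_i+2}$'' is appended to step~\ref{Re2.1}. The reason for the change of threshold is that the triple now has shape $132$: its middle-sized entry is the \emph{last} one, $a_{j_i+2}$ (rather than the first one, as for the $132$-shaped triple arising from $\underline{241}3$), and it is this entry that the peak $a_{j_i+1}$ must remain above if the triple is to keep shape $132$ after the replacement. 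The modified algorithm lowers $a_{j_i+1}$ to the smallest entry of the suffix strictly between $a_{j_i+2}$ and $a_{j_i+1}$, and pushes the original large value $a_{j_i+1}$ to a strictly later position, so the triple $a_{j_i}a_{j_i+1}a_{j_i+2}$ retains shape $132$, it now forms an occurrence of $\underline{132}4$ together with that pushed-up value, and both $\std(a_{j_i+3}\cdots a_n)$ and $\des$ are unchanged. Setting $g'(\pi)$ to be the permutation after all $l$ stages, every occurrence of $\underline{142}3$ in $\pi$ has been turned into an occurrence of $\underline{132}4$, so $g'(\pi)\in\S_n(\widecheck{\underline{132}4},\underline{142}3)$.

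To finish, I would verify the exact analogues of items (i)--(vii) in the proof of Theorem~\ref{231-4--241-3}: that $\std$ of the part to the right of position $j_i+2$ is preserved at each stage, that $\des$ is preserved, that no occurrence of $\underline{132}4$ is created to the left of position $j_i$, that processing stage $i$ leaves each of the triples at $j_{i+1},\dots,j_l$ of shape $132$ and still admitting a $1423$-extension (their entries may change, but only the positions playing the role of the final ``$3$'' get relocated, never the leading triples), and finally that the construction is undone by running a symmetrically modified \textit{REPLACEMENT-II} (with its threshold in step~\ref{Re2-} and step~\ref{Re2.1-} tied to $b_{j_i+2}$) on the $\underline{132}4$-occurrences of $g'(\pi)$, now processed in the order $j_l,j_{l-1},\dots,j_1$. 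I would also include one or two worked examples in the spirit of Table~\ref{tab:231-4--241-3} for concreteness.

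The main obstacle is exactly the one present in Theorem~\ref{231-4--241-3}: showing that $g'$ preserves the ``maximality'' built into the choice of the pushed-up value at each stage, so that the index set $\{j_1,\dots,j_l\}$ is a genuine invariant of the construction and the forward pass of $g'$ and the backward pass of $(g')^{-1}$ are mutually inverse. The delicate point is to confirm that the cascade of replacements triggered at index $j_i$ only relocates data attached to the later occurrences (their ``$3$''-positions) without disturbing their leading $132$-triples, and that $a_{j_i+2}$ is precisely the threshold guaranteeing simultaneously that the triple survives in shape $132$ under \textit{REPLACEMENT-I} and that the symmetrically modified \textit{REPLACEMENT-II} inverts it; with this bookkeeping in hand, all remaining checks are routine and run parallel to those in the proof of Theorem~\ref{231-4--241-3}.
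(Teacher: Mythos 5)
Your proposal is correct and takes essentially the same route as the paper: the paper's own proof simply declares the argument analogous to that of Theorem~\ref{231-4--241-3}, with the \textit{REPLACEMENT-I} threshold changed from $a_{j_1}$ to $a_{j_1+2}$ (and \textit{REPLACEMENT-II} adjusted accordingly), which is precisely the modification you identify and justify. In fact you supply more of the omitted verification (why $a_{j_i+2}$ is the right threshold, preservation of $\std$ on the suffix and of $\des$, and the invariance of the index set $\{j_1,\dots,j_l\}$) than the paper does.
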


\begin{proof}
  The proof is analogous to that of Theorem~\ref{231-4--241-3}, we only need to transform all of the $\underline{132}4$ occurrences to $\underline{142}3$ occurrences. Note the following modifications to the \textit{REPLACEMENT-I} algorithm though:
  \begin{itemize}
      \item  the step~\ref{Re2} of \textit{REPLACEMENT-I} is modified to ``$a$ is larger than some element which is larger than $a_{j_1+2}$ in $w$'';
      \item  and add the condition ``$y>a_{j_1+2}$'' to \textit{REPLACEMENT-I}~\ref{Re2.1}. 
  \end{itemize}
  The inverse mapping and the \textit{REPLACEMENT-II} algorithm should be adjusted accordingly. We omit the details.
\end{proof}

\begin{theorem}
  We have the $\des$-Wilf equivalence $\underline{134}2 \overset{\des}{\sim } \underline{124}3$.  \label{134-2--124-3}   
\end{theorem}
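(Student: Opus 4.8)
Following the strategy of Theorems~\ref{231-4--241-3} and~\ref{132-4--142-3}, I would declare the permutations of $\S_n(\underline{134}2,\underline{124}3)$ to be fixed points and build a $\des$-preserving bijection $h\colon \S_n(\underline{134}2,\widecheck{\underline{124}3})\to\S_n(\widecheck{\underline{134}2},\underline{124}3)$. For $\pi=a_1\cdots a_n$, an occurrence of either pattern uses an \emph{ascending} consecutive triple $a_ja_{j+1}a_{j+2}$ together with a later entry $a_p$ ($p>j+2$), the type being $\underline{124}3$ when $a_{j+1}<a_p<a_{j+2}$ and $\underline{134}2$ when $a_j<a_p<a_{j+1}$. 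Thus a $\underline{124}3$ occurrence is converted into a $\underline{134}2$ occurrence \emph{on the same triple} by raising the middle letter $a_{j+1}$ above $a_p$ while keeping it below $a_{j+2}$: concretely, I would run the variant of \textit{REPLACEMENT-II} on the pair $(a_{j+1},\,a_{j+3}a_{j+4}\cdots a_n)$ in which step~\ref{Re2-} only considers entries of $w$ smaller than $a_{j+2}$ and the condition ``$y<a_{j+2}$'' is added to step~\ref{Re2.1-}. Since \textit{REPLACEMENT-II} never touches entries larger than the current value of $a_{j+1}$, and since the avoidance of $\underline{134}2$ by $\pi$ forces the suffix $a_{j+3}\cdots a_n$ to contain no value in $(a_j,a_{j+1})$, this operation only rearranges entries whose values lie in a single block of consecutive values; hence $\std(a_{j+3}\cdots a_n)$ is unchanged, every descent of $\pi$ is preserved, and the chosen $\underline{124}3$ occurrence becomes a $\underline{134}2$ occurrence.

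The new difficulty, relative to Theorems~\ref{231-4--241-3} and~\ref{132-4--142-3}, is that the consecutive triple is now fully ascending, so the ``protective'' inequality that there prevented a single conversion from spawning a fresh occurrence of the forbidden pattern is absent: raising $a_{j+1}$ can create a new $\underline{124}3$ occurrence on the triple $a_{j-1}a_ja_{j+1}$ immediately to the left (for instance $123654\mapsto 125643$, which still contains $\underline{124}3$). I would therefore \emph{iterate}: repeatedly locate the left-most occurrence of $\underline{124}3$ in the current word and convert it, taking $h(\pi)$ to be the permutation obtained once none remain. A conversion at position $j$ modifies only position $j+1$ and suffix positions $\ge j+3$; using the avoidance hypothesis together with the fact that consecutive $\underline{124}3$-occurrence indices differ by at least $2$, one should be able to show that it can create a new occurrence only at position $j-1$, and only when $a_{j-1}<a_j$, so that each step either strictly lowers the number of $\underline{124}3$ occurrences or keeps it fixed while strictly lowering the left-most occurrence index. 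The cascade then travels strictly leftward along a maximal ascending run, and this monovariant argument shows the process terminates; since each conversion is $\des$-preserving, so is $h$.

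To invert, I would run the mirror iteration: given $\rho\in\S_n(\widecheck{\underline{134}2},\underline{124}3)$, repeatedly locate the \emph{right}-most occurrence of $\underline{134}2$ and convert it back to a $\underline{124}3$ occurrence by \emph{lowering} its middle letter below the trailing entry, using the corresponding variant of \textit{REPLACEMENT-I} on $(a_{j+1},a_{j+3}\cdots a_n)$ restricted (in steps~\ref{Re2} and~\ref{Re2.1}) to entries larger than $a_j$, until no $\underline{134}2$ occurrence is left. One then needs to check that this reverse iteration undoes the forward one step by step in a last-in-first-out pattern, that it too is $\des$-preserving and terminating, and --- most delicately --- that its output genuinely avoids $\underline{124}3$ while still containing $\underline{134}2$, so that the iteration lands exactly in the target set. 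This final bookkeeping, which amounts to tracking how the occurrence sets of \emph{both} patterns evolve under one conversion and reconciling the ``left-most'' forward rule with the ``right-most'' backward rule, is the step I expect to be the main obstacle; the descent-preservation and termination claims, by contrast, follow rather directly from the single clean fact that every conversion rearranges the permutation only within a block of consecutive values.
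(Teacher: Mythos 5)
Your conversion mechanism is the same one the paper uses: locate the occurrence indices of one pattern, and move the middle letter of the ascending triple through the suffix with a restricted \textit{REPLACEMENT} algorithm, so that only a block of consecutive values is permuted and hence $\std$ of the suffix and all descents are preserved. The only cosmetic difference is the direction: you start from $\S_n(\underline{134}2,\widecheck{\underline{124}3})$ and \emph{raise} the middle letter via \textit{REPLACEMENT-II}, while the paper starts from $\S_n(\underline{124}3,\widecheck{\underline{134}2})$ and \emph{lowers} it via \textit{REPLACEMENT-I}. The cascading difficulty you isolate is genuine and is in fact the crux of this pair: because the triple is fully ascending, a conversion at position $j$ can manufacture a brand-new occurrence of the pattern being eliminated at position $j-1$ (your example $123654\mapsto 125643$ is correct), and the mirror phenomenon occurs in the lowering direction as well (e.g.\ $145632\mapsto 125643$, which still contains $\underline{134}2$ starting at position $2$). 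So a single pass over the occurrences present in the input does not land in the target set, and some iteration of the kind you propose is unavoidable; the paper's brief adaptation of Theorem~\ref{231-4--241-3} does not engage with this point, so your observation here is a real contribution rather than a detour.

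That said, what you have written is a plan, and the piece you defer is exactly the theorem. The $\des$-Wilf equivalence follows only once $h$ is shown to be a bijection, and that rests entirely on two claims you leave as expectations: (i) the backward iteration (right-most $\underline{134}2$ occurrence, lower via \textit{REPLACEMENT-I}) undoes the forward iteration (left-most $\underline{124}3$ occurrence, raise via \textit{REPLACEMENT-II}) in last-in-first-out order, i.e.\ the occurrence your forward process converts last is precisely the one the backward process selects first, at every stage; and (ii) the backward iteration, started from an \emph{arbitrary} element of $\S_n(\widecheck{\underline{134}2},\underline{124}3)$ (not merely from an element known to be in the image of $h$), terminates inside $\S_n(\underline{134}2,\widecheck{\underline{124}3})$, which is what surjectivity requires. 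Without (i) the map could fail to be injective, since after several conversions there is no record of which occurrences were original and which were created; without (ii) it could fail to be onto. The termination monovariant is also only asserted: you need to verify that a conversion at $j$ cannot create occurrences anywhere except at $j-1$ (positions $\le j-2$ are safe because their triples and the value set to their right are unchanged, positions $\ge j+1$ are safe because $\std$ of the suffix is preserved and the permuted values stay below $a_{j+2}$ --- but this has to be written down), and that the occurrences at $j_2,\ldots,j_l$ survive each conversion. These checks are where the proof actually lives, so as it stands the argument has a genuine gap.
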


\begin{proof}
  The same approach for proving Theorem~\ref{231-4--241-3} works for this pair as well. We want to transform all $\underline{134}2$ occurrences to $\underline{124}3$ occurrences, with the two algorithms modified as follows. For the forward mapping relying on \textit{REPLACEMENT-I},
  \begin{itemize}
      \item  \textit{REPLACEMENT-I}~\ref{Re2} is modified to ``$a$ is larger than some element which is larger than $a_{j_1}$ in $w$'';
      \item  and add the condition ``$y>a_{j_1}$'' to \textit{REPLACEMENT-I}~\ref{Re2.1}. 
  \end{itemize}
  For the backward mapping relying on \textit{REPLACEMENT-II}, 
  \begin{itemize}
      \item \textit{REPLACEMENT-II}~\ref{Re2-} is modified to ``$a$ is smaller than some element which is smaller than $a_{j_1+2}$ in $w$'';
      \item  and add the condition ``$y<a_{j_1+2}$'' to \textit{REPLACEMENT-II}~\ref{Re2.1-}. \qedhere
  \end{itemize}
\end{proof}

At this point, we have fully confirmed the fourteen $\des$-Wilf equivalence classes for quasi-consecutive patterns of length four; see Table~\ref{tab: descent vector of length 4}.

\section{Generating functions} \label{sec:g.f. of 4}

According to Theorem~\ref{structure thm}, we have the generating function for the quasi-consecutive pattern $\sigma k$, where $k$ is the largest element, provided the generating function for $\sigma$ is known. In this section, we derive an explicit generating function for one of the patterns, namely $\underline{123}$, and subsequently for $\underline{321}$ as well, thanks to Proposition~\ref{omega--omega^c}. The approach we use is the so-called generalized run theorem; see Theorem~\ref{thm:gen run thm}. For the remaining four patterns, $\underline{132}$, $\underline{213}$, $\underline{231}$, and $\underline{312}$, we demonstrate that their generating functions satisfy certain partial differential equations, as presented in Subsection~\ref{subsec:g.f.132}.

\subsection{The run theorem} \label{subsec:run theorem}

In this subsection we use Zhuang's generalized \emph{run theorem} \cite{zhu16} to get two generating functions. Here ``run'' refers to a maximal weakly increasing consecutive subsequence in a word, so naturally this theorem suits the increasing pattern $\underline{123}$ the best. But according to Proposition~\ref{omega--omega^c}, finding the Eulerian distribution over $\underline{123}$-avoiding permutations is equivalent to finding the Eulerian distribution over $\underline{321}$-avoiding permutations. And the bivariate generating function for $\underline{321}$-avoiding permutations is already known to be \cite{MR06}

\begin{align}\label{Sn321des}
	\sum_{n\geq 0}\dfrac{x^n}{n!}\sum_{\pi \in S_n(\underline{321})} t^{\mathrm{des}(\pi)}=\dfrac{\mathrm{e}^{x/2}\sqrt{1-4t}}{\sqrt{1-4t}\cosh {\left(\frac{x}{2}\sqrt{1-4t}\right)}-\sinh{\left(\frac{x}{2}\sqrt{1-4t}\right)}}.
\end{align}
See also \cite[Thm.~5.3.4]{kit11}. Our proof of the following result could thus be viewed as an alternative approach to deriving \eqref{Sn321des}. We only sketch a proof here using the run theorem and illustrate the associated ``run network'' in Fig.~\ref{sn123runnetwork}. To make the paper self-contained however, we provide some preliminary definitions and facts in the Appendix, and the reader is referred to Zhuang's original paper~\cite{zhu16} for further information.

Note that a permuation $\pi$ avoids the consecutive pattern $\underline{123}$ if and only if each run of $\pi$ has length 1 or 2. Therefore, the corresponding run network is given by Fig.~\ref{sn123runnetwork}.


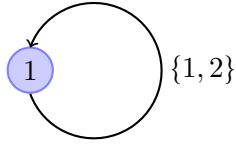
\begin{figure}[htbp]	
	 \centering
		\begin{tikzpicture}
			[phase/.style ={circle,draw=blue!50,fill=blue!20,thick,inner sep=0pt, minimum size=6mm}]
			\node (A) at (0,-2) [phase] {1};
			\node (C) at (2.3,-2) {$\{1,2\}$};
			\draw[->,thick] (A.south) arc[start angle=-160, end angle=160, radius=9mm];
		\end{tikzpicture}
\caption{Run network for permutations avoiding consecutive pattern $\underline{123}$.}
\label{sn123runnetwork}
\end{figure}

\begin{theorem}
\begin{align}\label{Sn(123)des+1}
1+\sum_{n\geq 1}\dfrac{x^n}{n!}\sum_{\pi \in \S_n(\underline{123})} t^{\mathrm{des}(\pi)+1}=\dfrac{\mathrm{e}^{tx/2}\sqrt{t-4}}{\sqrt{t-4}\cosh {\left(\frac{x}{2}\sqrt{t^2-4t}\right)}-\sqrt{t}\sinh{\left(\frac{x}{2}\sqrt{t^2-4t}\right)}},
\end{align}
equivalently, we have
\begin{align}\label{Sn(123)des}
	\sum_{n\geq 0}\dfrac{x^n}{n!}\sum_{\pi \in \S_n(\underline{123})} t^{\mathrm{des}(\pi)}=\dfrac{t^{-1}\mathrm{e}^{tx/2}\sqrt{t-4}}{\sqrt{t-4}\cosh {\left(\frac{x}{2}\sqrt{t^2-4t}\right)}-\sqrt{t}\sinh{\left(\frac{x}{2}\sqrt{t^2-4t}\right)}}-\dfrac{1}{t}+1.
\end{align}
\end{theorem}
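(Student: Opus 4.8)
The plan is to derive \eqref{Sn(123)des+1} by a direct application of Zhuang's generalized run theorem to the run network in Fig.~\ref{sn123runnetwork}, and then obtain \eqref{Sn(123)des} from it by trivial algebraic manipulation. First I would recall the setup of the run theorem: for a class of permutations whose permitted run lengths are encoded by a ``run network'' (a finite directed graph with phases and transition sets), the theorem expresses a suitable exponential generating function tracking descents in terms of formal manipulations (a matrix-valued or series-valued quantity) built from the network. In our case the network is especially simple: a single phase, with the self-loop labeled $\{1,2\}$ recording that every run has length $1$ or $2$. Since a permutation avoids $\underline{123}$ exactly when it has no weakly increasing run of length $\geq 3$ (equivalently, no three consecutive positions $i<i+1<i+2$ with $\pi_i<\pi_{i+1}<\pi_{i+2}$), this run network precisely captures $\S_n(\underline{123})$.

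Next I would feed this network into the run theorem. Concretely, the key generating-function input for a single-phase network with allowed run lengths $L\subseteq\{1,2,3,\dots\}$ is the polynomial (or series) $\sum_{\ell\in L} x^\ell/\ell!$; here that is $x + x^2/2$. The run theorem then packages the descent-refined EGF as a rational expression in $x$, $t$, and this input series, typically of the form $1/(1 - t\cdot(\text{something}))$ or, after solving an associated linear recurrence, as a ratio whose denominator is the ``characteristic'' combination of the run-length data. The characteristic equation here will be quadratic (because runs have length at most $2$, so the underlying linear recurrence has order $2$), which is exactly why hyperbolic functions $\cosh$ and $\sinh$ with argument involving $\sqrt{t^2-4t}$ appear: solving a second-order constant-coefficient linear ODE/recurrence whose discriminant is $t^2-4t$ produces precisely $e^{tx/2}$ times a combination of $\cosh\bigl(\tfrac{x}{2}\sqrt{t^2-4t}\bigr)$ and $\sinh\bigl(\tfrac{x}{2}\sqrt{t^2-4t}\bigr)$. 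I would carry out this solving step, match the initial conditions (the empty permutation and the length-one permutation), and simplify to recover the right-hand side of \eqref{Sn(123)des+1}, with the $\sqrt{t-4}$ and $\sqrt{t}$ prefactors emerging from the normalization of the two fundamental solutions.

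The passage from \eqref{Sn(123)des+1} to \eqref{Sn(123)des} is then purely bookkeeping: write $F(x,t):=\sum_{n\ge 1}\frac{x^n}{n!}\sum_{\pi\in\S_n(\underline{123})}t^{\des(\pi)+1}$, so the left side of \eqref{Sn(123)des+1} is $1+F(x,t)$; then
\begin{align*}
\sum_{n\ge 0}\frac{x^n}{n!}\sum_{\pi\in\S_n(\underline{123})}t^{\des(\pi)}
&= 1 + \sum_{n\ge 1}\frac{x^n}{n!}\sum_{\pi\in\S_n(\underline{123})}t^{\des(\pi)}
= 1 + \frac{1}{t}F(x,t)\\
&= 1 + \frac{1}{t}\Bigl( (1+F(x,t)) - 1\Bigr),
\end{align*}
and substituting the closed form for $1+F(x,t)$ from \eqref{Sn(123)des+1} gives \eqref{Sn(123)des} verbatim. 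I would also note, as a sanity check, that setting $t=1$ in \eqref{Sn(123)des} should reduce to the known EGF for $\S_n(\underline{123})$ (counting such permutations without the descent refinement).

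The main obstacle I anticipate is purely the careful application of the run theorem: one must correctly set up the matrix (here $1\times 1$) associated to the network, identify which of Zhuang's several formulas (the one tracking descents via $t$) is the relevant one, and get the normalization constants right so that the two solutions of the quadratic characteristic equation assemble into exactly the stated $\sqrt{t-4}$, $\sqrt{t}$ combination rather than some equivalent but differently-presented form. Since the paper only promises a sketch and defers the precise definitions to an appendix, I would present the argument at the level of: (1) justify the run network, (2) invoke the run theorem with input series $x+x^2/2$, (3) solve the resulting second-order linear ODE in $x$ with the parameter $t$, (4) match initial data and simplify to \eqref{Sn(123)des+1}, and (5) perform the elementary rearrangement above to get \eqref{Sn(123)des}; and I would point to Fig.~\ref{sn123runnetwork} and the Appendix for the details of step (2).
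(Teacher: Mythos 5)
Your plan follows essentially the same route as the paper: apply Zhuang's generalized run theorem to the one-vertex network with allowed run lengths $\{1,2\}$, reduce to a degree-two characteristic equation with discriminant $t^2-4t$ (whence the hyperbolic functions and the $e^{tx/2}$ prefactor), match initial data, and then pass from \eqref{Sn(123)des+1} to \eqref{Sn(123)des} by the elementary substitution you describe, which is correct. The one substantive point to fix is your description of the theorem's input. It is \emph{not} the exponential polynomial $x+x^2/2$, and there is no formula of the shape $1/\bigl(1-t(x+x^2/2)\bigr)$ here; that expression already gives the wrong count at $n=3$ (it predicts $12$ rather than $|\S_3(\underline{123})|=5$). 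In the version of the theorem stated in the paper's appendix the weights enter through the \emph{ordinary} generating function $1+tx+tx^2$ (weight $t$ per run of length $1$ or $2$); one first inverts this in $F[[x]]$ to get coefficients $v_n$, then forms the Borel transform $g(x)=\sum_n v_n x^n/n!$ (this is where the factorials appear, via $\Phi(\mathbf{h}_n)=x^n/n!$), and the desired enumerator is the reciprocal $1/g(x)$. Your step (3) is then the right computation in disguise: since $v_{n+2}+tv_{n+1}+tv_n=0$, the function $g$ satisfies $g''+tg'+tg=0$ with $g(0)=1$, $g'(0)=-t$, whose characteristic roots are $\tfrac{1}{2}\bigl(-t\pm\sqrt{t^2-4t}\bigr)$; solving and taking the reciprocal yields exactly the right-hand side of \eqref{Sn(123)des+1}. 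The paper performs this same inner computation by partial fractions of $(1+tx+tx^2)^{-1}$ followed by exponentiation rather than by an ODE, but the two are equivalent. With that correction to step (2), your outline matches the paper's proof.
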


\begin{proof}
According to Fig.~\ref{sn123runnetwork} and applying Theorem~\ref{thm:gen run thm}, the inverse of $1+xt+x^2t$ is enumerator of those permutations whose increasing runs are no more than 2, and each increasing run is weighted by $t$. Next we proceed to seek the exponential generating function expression of $$\frac{1}{1+xt+x^2t}$$ with respect to the variable $x$. Since 
$$\dfrac{1}{1+xt+xt^2}=\dfrac{t-4+\sqrt{t^2-4t}}{2\left(t-4\right)}\dfrac{1}{1-\frac{2tx}{-t+\sqrt{t^2-4t}}}+\dfrac{t-4-\sqrt{t^2-4t}}{2(t-4)}\dfrac{1}{1-\frac{2tx}{-t-\sqrt{t^2-4t}}},$$ 
the exponential generating function turns out to be 
$$\frac{t-4+\sqrt{t^2-4t}}{2\left(t-4\right)} \mathrm{Exp}\left(\frac{-tx-x\sqrt{t^2-4t}}{2}\right)+\frac{t-4-\sqrt{t^2-4t}}{2\left(t-4\right)} \mathrm{Exp}\left(\frac{-tx+x\sqrt{t^2-4t}}{2}\right)$$
whose inverse simplifies to the desired expression \eqref{Sn(123)des+1}.
\end{proof}



\subsection{The generating functions involving partial differential equations} \label{subsec:g.f.132}
Noting the equivalences implied by Proposition~\ref{omega--omega^rc}: $\underline{312} \overset{\des}{\sim }\underline{231} $, $\underline{213} \overset{\des}{\sim }\underline{132}$, and the relation $\underline{231}=(\underline{132})^r$, it suffices to compute the generating function for one of the patterns, say $\underline{132}$.

Let $U_n(t)$ (resp., $V_n(t)$) denote the generating function of the permutations $\pi$ of length $n$ that avoid the pattern $\underline{132}$ and start with an ascent $\pi_1 < \pi_2$ (resp., a descent $\pi_1 > \pi_2$), where $t$ tracks the descent number of $\pi$.
We define $U = U(x,t) := \sum_{n \geq 0} U_n(t) \frac{x^n}{n!}$ and $V = V(x,t) := \sum_{n \geq 0} V_n(t) \frac{x^n}{n!}$, with the initial terms given by $U_0(t) = U_1(t) = V_0(t) = V_1(t) = 0$.

\begin{proposition} \label{132}
The bivariate generating functions $U$ and $V$ satisfy the partial differential equations:
    \begin{align*}
    \dfrac{\partial U}{\partial x} &=(tU+1)(U+x+1)-1,\\
    \dfrac{\partial V}{\partial x} &=t(V+x)(U+x+1).
    \end{align*}
    Moreover,
    \begin{align*}
    A^{\underline{132}}(x,t)=A^{\underline{213}}(x,t)=1+x+U(x,t)+V(x,t).
    \end{align*}
\end{proposition}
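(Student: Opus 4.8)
The plan is to derive the two partial differential equations by a first-return decomposition of $\underline{132}$-avoiding permutations at the position of the maximal element, distinguishing whether the permutation begins with an ascent or a descent, and then to assemble $A^{\underline{132}}$ from $U$ and $V$ plus the low-order terms. Throughout I would use the classical fact (recalled in the excerpt) that $\S_n(\underline{132}) = \S_n(132)$, so structural decompositions available for classical $132$-avoidance apply verbatim; the only extra bookkeeping is to track descents.

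First I would set up the decomposition. Given $\pi \in \S_n(\underline{132})$ with $n \geq 2$, write $\pi = \pi_L\, n\, \pi_R$ where $n$ is the largest entry; since $\pi$ avoids $132$, every entry of $\pi_L$ must exceed every entry of $\pi_R$, so $\std(\pi_L)$ and $\std(\pi_R)$ are themselves $\underline{132}$-avoiding and can be chosen independently. The descent count satisfies $\des(\pi) = \des(\pi_L) + \des(\pi_R) + [\pi_R \neq \emptyset]$, the extra descent coming from the step $n \,\pi_R^{(1)}$ (and there is no descent at the junction $\pi_L^{\text{last}}\, n$). The subtlety is that "starts with an ascent/descent" for $\pi$ is governed by $\pi_L$: if $\pi_L$ is empty then $\pi = n\,\pi_R$ starts with a descent (when $\pi_R \neq \emptyset$); if $\pi_L$ is a single entry then $\pi$ starts with an ascent; if $|\pi_L| \geq 2$ then $\pi$ inherits the first-step behaviour of $\pi_L$. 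Translating this case analysis into exponential generating functions — using that the empty and singleton contributions to $\pi_L$ are exactly the terms $1$ and $x$ that were split off in the statement, so that $U+x+1$ is the full EGF of $\underline{132}$-avoiders weighted by $t^{\des}$ — should yield, after differentiating in $x$ to strip the largest element, the two displayed identities: the $(tU+1)$ factor encodes the $\pi_L$ side (with the correction for whether a new descent appears, hence the "$-1$" and the "$+1$" asymmetry between the ascent and descent cases), and $(U+x+1)$ encodes the $\pi_R$ side.

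Once the PDEs are in hand, the identity $A^{\underline{132}} = 1 + x + U + V$ is essentially a tautology from the definitions: $A^{\underline{132}}(x,t) = \sum_{n\geq 0} A_n^{\underline{132}}(t) x^n/n!$, and for $n \geq 2$ every $\underline{132}$-avoiding permutation of length $n$ starts with either an ascent or a descent, so $A_n^{\underline{132}}(t) = U_n(t) + V_n(t)$; the $n=0$ and $n=1$ terms contribute $1$ and $x$ respectively, matching $U_0=U_1=V_0=V_1=0$. Finally $A^{\underline{213}} = A^{\underline{132}}$ follows from Proposition~\ref{omega--omega^rc}, since $\underline{213} = (\underline{132})^{rc}$ (reverse of $\underline{132}$ is $2\underline{31}$, whose complement is... one checks directly that $\underline{213}$ and $\underline{132}$ are related by an $rc$-type move, or simply invoke $\underline{213}\overset{\des}{\sim}\underline{132}$ as already noted in the text preceding the proposition).

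The main obstacle I anticipate is getting the descent bookkeeping at the junction exactly right across all the boundary cases — in particular, correctly accounting for when $\pi_L$ is empty or a singleton (which is precisely why the statement isolates $U+x+1$ rather than just $U$), and making sure the "+1 descent from $n\,\pi_R$" is attached only when $\pi_R$ is nonempty. This is what produces the non-obvious "$-1$" in the first equation and the absence of a "$+1$"-type additive constant in the second; verifying these against small cases ($n=2,3,4$) would be the sanity check I would run before trusting the general derivation.
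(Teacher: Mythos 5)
Your proposal rests on a false identification of the pattern. The object in Proposition~\ref{132} is the \emph{consecutive} pattern $\underline{132}$ (all three letters adjacent), not the vincular pattern $\underline{13}2$ or the classical pattern $132$. The fact recalled in the paper is $\S_n(\underline{13}2)=\S_n(132)$; it does \emph{not} extend to $\S_n(\underline{132})$. For example, $2413$ contains a classical $132$ (as $2,4,3$) but no three \emph{adjacent} letters forming $132$, so $2413\in\S_4(\underline{132})\setminus\S_4(132)$; indeed the paper's own expansion gives $|\S_4(\underline{132})|=U_4(1)+V_4(1)=7+9=16\neq 14=C_4$, and the descent polynomial $1+8t+6t^2+t^3$ is not the Narayana polynomial your decomposition would produce. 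Consequently the entire engine of your argument fails: writing $\pi=\pi_L\,n\,\pi_R$ and asserting that every entry of $\pi_L$ exceeds every entry of $\pi_R$ is a structural consequence of \emph{classical} $132$-avoidance and is simply untrue for consecutive avoidance (again, $2413$ is a counterexample). No amount of descent bookkeeping at the junction repairs this, because the claimed independence of $\pi_L$ and $\pi_R$ does not hold.

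The paper instead decomposes at the position of the \emph{minimum}: writing $\pi=\alpha\,1\,\beta$, the letter $1$ can only play the role of the ``$1$'' in a consecutive occurrence of $\underline{132}$ (it cannot be the middle or last letter of such a factor), so $\pi$ avoids $\underline{132}$ if and only if $\alpha$ and $\beta$ do and $\beta$ does not begin with a descent. This is exactly why the statement splits the generating function into $U$ (ascent start) and $V$ (descent start): the right factor $\beta$ is always constrained to lie in the ``$U$-type'' class (or have length $\le 1$, whence the $U+x+1$), while the left factor $\alpha$ determines whether $\pi$ itself starts with an ascent or a descent. Summing over the position of $1$ and the choice of values for $\alpha$ yields the recurrences
\begin{equation*}
U_{n+1}(t)=tU_n(t)+tnU_{n-1}(t)+t\sum_{i=2}^{n-2}\binom{n}{i}U_i(t)U_{n-i}(t)+U_n(t),
\end{equation*}
and its analogue for $V$, which translate into the two stated PDEs. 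Your final paragraph (the tautology $A^{\underline{132}}=1+x+U+V$ and the reduction $\underline{213}=(\underline{132})^{rc}$ via Proposition~\ref{omega--omega^rc}) is correct, but the heart of the proposition --- the two differential equations --- cannot be reached by the route you describe.
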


\begin{proof}
    Let $\pi \in \S_{n+1}(\underline{132})$. Considering the position of $1$ as illustrated in Fig.~\ref{fig:position of 1}, we obtain
    \begin{equation}\label{eqn-prop-4-6}U_{n+1}(t)=tU_n(t)+tnU_{n-1}(t)+t\sum_{i=2}^{n-2}\binom{n}{i}U_i(t)U_{n-i}(t)+U_{n}(t),\quad n \ge 3.\end{equation}

    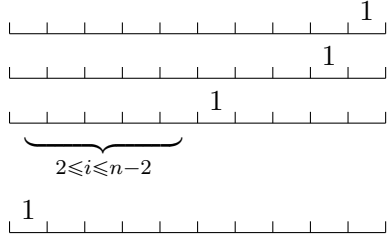
\begin{figure}[ht]
        \centering
        \begin{tikzpicture}
            \draw (0,0)--(5,0);
            \foreach \I in {0,0.5,...,5}
            {\draw (\I,0)--(\I,0.15);}
            \node at (4.75,0.3) {$1$};
        \end{tikzpicture}

        \begin{tikzpicture}
            \draw (0,0)--(5,0);
            \foreach \I in {0,0.5,...,5}
            {\draw (\I,0)--(\I,0.15);}
            \node at (4.25,0.3) {$1$};
        \end{tikzpicture}

        \begin{tikzpicture}
            \draw (0,0)--(5,0);
            \foreach \I in {0,0.5,...,5}
            {\draw (\I,0)--(\I,0.15);}
            \node at (2.75,0.3) {$1$};
            \node at (1.25,-0.3) {$\underbrace{\phantom{a+b+c+d}}_{2 \le i\le n-2}$ };
        \end{tikzpicture}

        \begin{tikzpicture}
            \draw (0,0)--(5,0);
            \foreach \I in {0,0.5,...,5}
            {\draw (\I,0)--(\I,0.15);}
            \node at (0.25,0.3) {$1$};
        \end{tikzpicture}
        \caption{Position of $1$ in $\pi \in \S_{n+1}(\underline{132})$}\label{fig:position of 1}
    \end{figure}
Multiplying by $\dfrac{x^n}{n!}$ both sides of (\ref{eqn-prop-4-6}) and summing over all $n\geq 3$, we have    

\begin{align*}
    \sum_{n\ge 3}U_{n+1}(t)\frac{x^n}{n!}={}&t\sum_{n\ge 3}U_n(t)\frac{x^n}{n!}+tx\sum_{n\ge 3}U_{n-1}(t)\frac{x^{n-1}}{(n-1)!} \\
    &+t\sum_{n\ge 3}\sum_{i=2}^{n-2}\binom{n}{i}U_i(t)U_{n-i}(t)\frac{x^n}{n!}+\sum_{n\ge 3}U_n(t)\frac{x^n}{n!} \\
    ={}& t\sum_{n\ge 3}U_n(t)\frac{x^n}{n!}+tx\sum_{n\ge 2}U_{n}(t)\frac{x^{n}}{n!} \\
    &+t\sum_{n\ge 3}\sum_{i=0}^{n}\binom{n}{i}U_i(t)U_{n-i}(t)\frac{x^n}{n!}+\sum_{n\ge 3}U_n(t)\frac{x^n}{n!} \\
    ={}& t\left(U-\frac{x^2}{2}\right)+txU+tU^2+\left(U-\frac{x^2}{2}\right).
\end{align*}

Note that $U_2(t)=1$ and $U_3(t)=1+t$. Hence,  
\begin{align*}
    \sum_{n\ge 3}U_{n+1}(t)\frac{x^n}{n!}&=\frac{\partial }{\partial x}\left(U-\frac{x^2}{2}-(1+t)\frac{x^3}{3!}\right) = \frac{\partial U}{\partial x}-x-(1+t)\frac{x^2}{2} \\
    &=t\left(U-\frac{x^2}{2}\right)+txU+tU^2+\left(U-\frac{x^2}{2}\right).
\end{align*}

Therefore, $\dfrac{\partial U}{\partial x}=x+tU+txU+tU^2+U=(tU+1)(U+x+1)-1$.
Using Mathematica, we obtain the expansion $U(x,t)=\dfrac{x^2}{2}+(1+t)\dfrac{x^3}{3!}+(1+5t+t^2)\dfrac{x^4}{4!}+(1+16t+10t^2+t^3)\dfrac{x^5}{5!}+(1+42t+71t^2+16t^3+t^4)\dfrac{x^6}{6!}+(1+99t+399t^2+197t^3+23t^4+t^5)\dfrac{x^7}{7!}+\cdots$.

Similarly, we have
$$V_{n+1}(t)=tV_n(t)+tnV_{n-1}(t)+t\sum_{i=2}^{n-2}\binom{n}{i}V_i(t)U_{n-i}(t)+tnU_{n-1}(t),\quad n\ge 3.$$
Note that $V_2(t)=t$ and $V_3(t)=2t+t^2$. Repeating the above computations for $V$, we have

\begin{align*}
    \sum_{n\ge 3}V_{n+1}(t)\frac{x^n}{n!}={}& t\sum_{n\ge 3}V_n(t)\frac{x^n}{n!}+tx\sum_{n\ge 3}V_{n-1}(t)\frac{x^{n-1}}{(n-1)!} \\
    &+t\sum_{n\ge 3}\sum_{i=2}^{n-2}\binom{n}{i}V_i(t)U_{n-i}(t)\frac{x^n}{n!}+tx\sum_{n\ge 3}U_{n-1}(t)\frac{x^{n-1}}{(n-1)!} \\
    ={}& t\sum_{n\ge 3}V_n(t)\frac{x^n}{n!}+tx\sum_{n\ge 2}V_{n}(t)\frac{x^{n}}{n!} \\
    &+t\sum_{n\ge 3}\sum_{i=0}^{n}\binom{n}{i}V_i(t)U_{n-i}(t)\frac{x^n}{n!}+tx\sum_{n\ge 2}U_{n}(t)\frac{x^{n}}{n!} \\
    ={} & t\left(V-\frac{tx^2}{2}\right)+txV+tUV+txU.
\end{align*}
Then,
\begin{align*}
    \sum_{n\ge 3}V_{n+1}(t)\frac{x^n}{n!} &= \frac{\partial }{\partial x}\left(V-t\frac{x^2}{2}-(2t+t^2)\frac{x^3}{3!}\right)=\frac{\partial V}{\partial x} -tx-tx^2\left(1+\frac{t}{2}\right) \\
    &=t\left(V-\frac{tx^2}{2}\right)+txV+tUV+txU.
\end{align*}

Therefore, $\dfrac{\partial V}{\partial x}=tx+tx^2+tV+txV+tVU+txU=t(V+x)(U+x+1)$.
Using Mathematica, $V(x,t)=t\dfrac{ x^2}{2}+(2t+t^2)\dfrac{x^3}{3!}+(3t+5t^2+t^3)\dfrac{x^4}{4!}+(4t+21t^2+9t^3+t^4)\dfrac{x^5}{5!}+\cdots$.
\end{proof}

\begin{corollary}\label{231}
We have
    \begin{align*}
    A^{\underline{231}}(x,t)=A^{\underline{312}}(x,t)=\dfrac{1}{t} \left(U(xt,\dfrac{1}{t})+V(xt,\dfrac{1}{t})\right)+x+1.
    \end{align*}
\end{corollary}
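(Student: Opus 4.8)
The plan is to derive the formula for $A^{\underline{231}}(x,t) = A^{\underline{312}}(x,t)$ by combining two facts already established in the excerpt: first, the $\des$-Wilf equivalence $\underline{231} \overset{\des}{\sim} \underline{132}$ (coming from Proposition~\ref{omega--omega^rc}, since $\underline{132} = (\underline{231})^{rc}$ up to the understood conventions), so that $A^{\underline{231}}(x,t) = A^{\underline{312}}(x,t) = A^{\underline{132}}(x,t)$; and second, the decomposition $A^{\underline{132}}(x,t) = 1 + x + U(x,t) + V(x,t)$ furnished by Proposition~\ref{132}. It then remains only to re-express things so that the formula is stated in terms of the \emph{same} series $U$ and $V$ (rather than introducing a new pair of auxiliary series for $\underline{231}$). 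This is exactly where Proposition~\ref{omega--omega^c} enters.

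First I would invoke Proposition~\ref{omega--omega^c} with $\omega = \underline{132}$. Since $\underline{231} = (\underline{132})^r$, that proposition gives
\[
A^{\underline{231}}(x,t) = A^{(\underline{132})^r}(x,t) = 1 + \frac{1}{t}\left(A^{\underline{132}}\!\left(xt, \tfrac{1}{t}\right) - 1\right).
\]
Next I would substitute the decomposition from Proposition~\ref{132}, namely $A^{\underline{132}}(y,s) = 1 + y + U(y,s) + V(y,s)$, evaluated at $y = xt$ and $s = 1/t$. This yields
\[
A^{\underline{231}}(x,t) = 1 + \frac{1}{t}\left(xt + U\!\left(xt,\tfrac1t\right) + V\!\left(xt,\tfrac1t\right)\right) = x + 1 + \frac{1}{t}\left(U\!\left(xt,\tfrac1t\right) + V\!\left(xt,\tfrac1t\right)\right),
\]
which is precisely the claimed identity. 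The equality $A^{\underline{231}}(x,t) = A^{\underline{312}}(x,t)$ is recorded separately in Proposition~\ref{132} (or follows again from Proposition~\ref{omega--omega^rc}, as $\underline{312} \overset{\des}{\sim} \underline{231}$), so I would just cite it.

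I do not expect any real obstacle here: the corollary is a two-line bookkeeping consequence of Proposition~\ref{132} and Proposition~\ref{omega--omega^c}, and the only thing to be careful about is the consistency of the $(x,t) \mapsto (xt, 1/t)$ substitution with the convention $A^{\omega^r} = A^{\omega^c}$ used throughout. The mildly delicate point — if anything deserves that label — is making sure that the "$1 + x$" term survives the substitution correctly (the $x$ in $A^{\underline{132}}$ becomes $xt$, which after dividing by $t$ returns to $x$), so that the non-series part of the answer comes out as $x + 1$ rather than something rescaled; this is a routine check.
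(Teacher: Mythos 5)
Your derivation is correct and is exactly the paper's proof: apply Proposition~\ref{omega--omega^c} with $\omega=\underline{132}$ (so $\omega^r=\underline{231}$), substitute $A^{\underline{132}}(xt,\tfrac1t)=1+xt+U(xt,\tfrac1t)+V(xt,\tfrac1t)$ from Proposition~\ref{132}, and simplify; the equality with $A^{\underline{312}}$ comes from $\underline{312}=(\underline{231})^{rc}$ via Proposition~\ref{omega--omega^rc}. One side remark in your opening paragraph is wrong, though it is never used: $(\underline{231})^{rc}=\underline{312}$, not $\underline{132}$, and $A^{\underline{231}}(x,t)$ is \emph{not} equal to $A^{\underline{132}}(x,t)$ --- these two series are related by the $(x,t)\mapsto(xt,\tfrac1t)$ substitution of Proposition~\ref{omega--omega^c}, which is precisely why that substitution appears in the final formula. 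Your actual computation uses the correct relation, so the proof stands once that sentence is deleted.
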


\begin{proof}
    Applying Proposition~\ref{omega--omega^c}, we obtain
    \begin{equation*}
    A^{\underline{231}}(x,t)=\dfrac{1}{t} \left(A^{\underline{132}}(xt,\dfrac{1}{t})-1\right)+1=\dfrac{1}{t} \left(xt+U(xt,\dfrac{1}{t})+V(xt,\dfrac{1}{t})\right)+1. \qedhere
    \end{equation*}
\end{proof}

\section{Conclusion}\label{conclusion}

This paper provides a complete $\des$-Wilf classification for quasi-consecutive patterns of length up to 4. Extending this classification to longer patterns would be interesting but may not be feasible with existing tools. Indeed, determining the descent distributions for consecutive patterns of length 4 or more is already a complex task, and even if successful, evaluating the integral in Theorem~\ref{id:struc} may not be possible. Nevertheless, descent distributions can still be studied over more restrictive sets of permutations, specifically when, in addition to avoiding a quasi-consecutive pattern, further constraints are imposed (e.g., avoidance of another pattern, a prescribed beginning or end of a permutation, etc.).

Finally, studying other permutation statistics, such as left-to-right and right-to-left minima and maxima  (see \cite{HK} for definitions), on permutations that avoid quasi-consecutive patterns would be an interesting direction for future research, extending our study of descents on such permutations.

\section{appendix}\label{sec:appendix}

To state the generalized run theorem, we start by introducing some definitions on words and compositions, then we review some basic facts about noncommutative symmetric functions. 

Given an alphabet $A$, the set of all words over $A$ is denoted by $A^*$, which is a free monoid under the operation of concatenation. If $A$ is a totally ordered set and $w \in A^*$, we call maximal weakly increasing consecutive subsequence in $w$ an \emph{increasing run} of $w$. It is clear that each word $w$ can be decomposed uniquely into a sequence of increasing runs. For example, let $\mathbf{P}$ denote the set of positve integers, then the word $w=73344256 \in \mathbf{P}^*$ is decomposed into three increasing runs, namely $7$, $3344$, and $256$.

Reall that for a permutation $\pi \in \S_n$, $\mathrm{Des}(\pi):=\{i \in [n-1]:\pi_i>\pi_{i+1}\}$ denotes the descent set of $\pi$. Clearly $\mathrm{Des}(\pi) \subseteq [n-1]$ and the lengths of increasing runs in $\pi$ from left to right constitute a composition of $n$. We call this composition the \emph{descent composition} of $\pi$, denoted as $\mathrm{Comp}(\pi)$. For example if $\pi=413782596$, then $\mathrm{Des}(\pi)=\{1,5,8\}$ and $\mathrm{Comp}(\pi)=\{1,4,3,1\}$. Obviously one can recover $\mathrm{Des(\pi)}$ from $\mathrm{Comp}(\pi)$ or the other way around. For $n\geq 1$ and $\pi \in \S_n$, we have $$|\mathrm{Comp}(\pi)|=\mathrm{des}(\pi)+1=|\mathrm{Des(\pi)}|+1.$$

Zhuang's generalized run theorem was developed to count words and permutations with restrictions on the lengths of their increasing runs.

Let $F$ be a field of characteristic zero. Denote by $F\left\langle\left\langle X_1, X_2, \ldots\right\rangle\right\rangle$ the $F$-algebra of formal power series in noncommuting variables $X_1, X_2, \ldots$. For $n \geq 1$, let

$$
\mathbf{h}_n:=\sum_{1 \leq i_1 \leq \cdots \leq i_n} X_{i_1} X_{i_2} \cdots X_{i_n}
$$and set $\mathbf{h}_0=1$. Then for any composition $L=\left(L_1, \ldots, L_l\right)$, let $\mathbf{h}_L:=\mathbf{h}_{L_1} \cdots \mathbf{h}_{L_l}$. The $F$-algebra $\mathbf{S y m}$ generated by the elements $\mathbf{h}_L$ is called the algebra of noncommutative symmetric functions with coefficients in $F$, which is a subalgebra of $F\left\langle\left\langle X_1, X_2, \ldots\right\rangle\right\rangle$. The vector space $\mathbf{S y m}_n$ of noncommutative symmetric functions homogeneous of degree $n$ is the span of the set $\left\{\mathbf{h}_L\right\}_{L \vdash n}$, where $L\vdash n$ means $L$ is a composition of $n$.

Next, we are going to introduce another base $\left\{\mathbf{r}_L\right\}_{L \vdash n}$ for the vector space of noncommutative symmetric functions homogeneous of degree $n$, see \cite{GKLLRT95} for more information about noncommutative symmetric functions. 

For any composition $L=\left(L_1, \ldots, L_l\right) \vdash n$, let

$$
\mathbf{r}_L:=\sum_{\left(i_1, \ldots, i_n\right)} X_{i_1} X_{i_2} \cdots X_{i_n}
$$where the sum is over all $\left(i_1, \ldots, i_n\right)$ satisfying

$$
\underbrace{i_1 \leq \cdots \leq i_{L_1}}_{L_1}>\underbrace{i_{L_1+1} \leq \cdots \leq i_{L_1+L_2}}_{L_2}>\cdots>\underbrace{i_{L_1+\cdots+L_{l-1}+1} \leq \cdots \leq i_n}_{L_l} .
$$

Define a partial order ``$\leq$'' on the set of compositions of $n$ by reverse refinement, that is, $K=\left(K_1, \ldots, K_k\right) \vdash n$ covers $L$ if and only if $L$ can be obtained from $K$ by replacing two consecutive parts $K_i$ and $K_{i+1}$ with $K_i+K_{i+1}$. It is easy to see that

$$
\mathbf{h}_L=\sum_{K \leq L} \mathbf{r}_K.
$$Thus, by inclusion-exclusion we have

$$
\mathbf{r}_L=\sum_{K \leq L}(-1)^{\#L-\#K} \mathbf{h}_K,
$$where $\#L$ denotes the number of parts in $L$. Note that the functions $\mathbf{h}_L$ and $\mathbf{r}_L$ are noncommutative versions of the \emph{complete symmetric functions} and the \emph{ribbon Schur functions} (see \cite[Chap.~7]{sta11}), respectively.

Define the homomorphism $\Phi: \mathbf{S y m} \rightarrow F[[x]]$ by $\Phi\left(\mathbf{h}_n\right)=\dfrac{x^n}{n!}$. Then, $$\Phi(\mathbf{h}_{L})=\frac{x^{L_1}}{L_1 !}\cdots \frac{x^{L_l}}{L_l !}=\binom{n}{L}\frac{x^n}{n!},$$
where we use the abbreviation for multinomial coefficient $\binom{n}{L}:=\binom{n}{L_1,\ldots,L_l}$. Although noncommutative symmetric functions are generating functions for words, the following lemma allows us to move from the realm of word enumeration to that of permutation enumeration.

\begin{lemma}{\cite{zhu17}}
For a given composition $L\vdash n$, we have  
\begin{equation}\label{homomorphsim}
\Phi (\mathbf{r}_L)=\sum_{\substack{\pi \in \S_n\\ \mathrm{Comp}(\pi)=L}} \frac{x^n}{n!}
\end{equation}
\end{lemma}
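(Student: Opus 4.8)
The plan is to deduce \eqref{homomorphsim} from one combinatorial identity together with the inclusion--exclusion relation between $\mathbf{h}_L$ and $\mathbf{r}_L$ recorded above. For a composition $L=(L_1,\dots,L_l)\vdash n$ write $S(L):=\{L_1,\,L_1+L_2,\,\dots,\,L_1+\cdots+L_{l-1}\}\subseteq[n-1]$ for its set of partial sums, and put $c_n(L):=|\{\pi\in\S_n:\mathrm{Comp}(\pi)=L\}|$, so that the right-hand side of \eqref{homomorphsim} equals $c_n(L)\,\frac{x^n}{n!}$. Applying the homomorphism $\Phi$ to the expansion $\mathbf{r}_L=\sum_{K\le L}(-1)^{\#L-\#K}\mathbf{h}_K$ and using $\Phi(\mathbf{h}_K)=\binom{n}{K}\frac{x^n}{n!}$, one gets
\[
\Phi(\mathbf{r}_L)=\Bigl(\,\sum_{K\le L}(-1)^{\#L-\#K}\binom{n}{K}\,\Bigr)\frac{x^n}{n!},
\]
so it remains to prove the numerical identity $c_n(L)=\sum_{K\le L}(-1)^{\#L-\#K}\binom{n}{K}$.

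The key step is the ``cumulative'' companion identity $\binom{n}{L}=\sum_{K\le L}c_n(K)$. To see it, observe that a permutation $\pi\in\S_n$ satisfies $\mathrm{Des}(\pi)\subseteq S(L)$ if and only if $S(\mathrm{Comp}(\pi))\subseteq S(L)$, which is exactly the statement that $\mathrm{Comp}(\pi)$ is a coarsening of $L$, i.e.\ $\mathrm{Comp}(\pi)\le L$ in the reverse-refinement order. On the other hand, a permutation with $\mathrm{Des}(\pi)\subseteq S(L)$ is determined uniquely by specifying which $L_1$ elements of $[n]$ form its first increasing block, which $L_2$ of the remaining elements form its second block, and so on, the order inside each block being forced; hence there are exactly $\binom{n}{L}$ of them. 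Splitting this set according to the exact descent composition gives $\binom{n}{L}=\sum_{K\le L}c_n(K)$.

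Finally, the map $L\mapsto S(L)$ is an order isomorphism from the poset of compositions of $n$ under reverse refinement onto the Boolean lattice $2^{[n-1]}$, with $|S(L)|=\#L-1$; its Möbius function is therefore $\mu(K,L)=(-1)^{|S(L)\setminus S(K)|}=(-1)^{\#L-\#K}$ for $K\le L$. Möbius inversion applied to the cumulative identity then yields $c_n(L)=\sum_{K\le L}(-1)^{\#L-\#K}\binom{n}{K}$, which is precisely $\Phi(\mathbf{r}_L)\cdot n!/x^n$, proving \eqref{homomorphsim}. I do not anticipate a genuine obstacle; the only points that require care are checking that $\binom{n}{L}$ counts the permutations with descent set \emph{contained in} (not equal to) $S(L)$ and aligning ``$\subseteq$'' with the coarsening direction of the partial order. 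Alternatively, one may sidestep the explicit Möbius function by noting that $\Phi(\mathbf{r}_L)\cdot n!/x^n$ and $c_n(L)$ satisfy the same unitriangular system $\sum_{K\le L}(\,\cdot\,)=\binom{n}{L}$ and hence coincide.
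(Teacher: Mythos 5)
Your proof is correct. The paper gives no proof of this lemma---it is quoted directly from Zhuang's work---so there is no in-paper argument to compare against; what you wrote is the standard derivation. Two small remarks: (1) the triangularity shortcut you mention at the end is the cleanest route and makes the explicit M\"obius computation unnecessary---applying $\Phi$ to the relation $\mathbf{h}_L=\sum_{K\le L}\mathbf{r}_K$ already recorded in the appendix gives $\binom{n}{L}\frac{x^n}{n!}=\sum_{K\le L}\Phi(\mathbf{r}_K)$, while your counting identity $\binom{n}{L}=\sum_{K\le L}c_n(K)$ shows that $c_n(K)\frac{x^n}{n!}$ satisfies the same unitriangular system, forcing $\Phi(\mathbf{r}_K)=c_n(K)\frac{x^n}{n!}$ for all $K$; (2) you correctly handled the one point that genuinely needs care, namely that under the paper's ``reverse refinement'' convention $K\le L$ means $S(K)\subseteq S(L)$, i.e.\ $K$ is the \emph{coarser} composition, so that $\mathrm{Des}(\pi)\subseteq S(L)$ is equivalent to $\mathrm{Comp}(\pi)\le L$.
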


\begin{Def}{(\emph{run network})} A digraph $G$ on vertex set $[m]$ where each arc $(i, j)$ is assigned a nonempty subset $P_{i, j} \subseteq \mathbf{P}$ is called a network of order $m$. Given such a network, let $P$ be the set of all pairs $(a, e)$ such that $a \in P_{i, j}$ and $e=(i, j)$ is an arc of $G$. Furthermore, let $\overrightarrow{P^*} \subseteq P^*$ be the subset of all sequences $\alpha=\left(a_1, e_1\right)\left(a_2, e_2\right) \cdots\left(a_n, e_n\right)$ such that $e_1 e_2 \cdots e_n$ is a walk in $G$.

Let $(G, P)$ be a network of order $m$ with $\overrightarrow{P^*}$ as constructed above. For $\alpha=\left(a_1, e_1\right) \cdots\left(a_n, e_n\right) \in \overrightarrow{P^*}$ with $i$ and $j$ respectively the initial and terminal vertices of the walk $e_1 \cdots e_n$, let $\rho(\alpha)=\left(a_1, a_2, \ldots, a_n\right)$ and $E(\alpha)=(i, j)$. Then $(G, P)$ is a run network of order $m$ if for any $\alpha, \beta \in \overrightarrow{P^*}$, whenever $\rho(\alpha)=\rho(\beta)$ and $E(\alpha)=E(\beta)$, there is $\alpha=\beta$. In other words, the same tuple $\left(a_1, a_2, \ldots, a_n\right)$ cannot be obtained by traversing two different walks in $G$ with the same initial and terminal vertices.
\end{Def}

\begin{theorem}{(Generalized Run Theorem~\cite{zhu17})}\label{thm:gen run thm}
  Suppose that $(G, P)$ is a run network of order $m$ and $A$ is a unital $F$-algebra of characteristic zero. Let $\left\{w_a^{(i , j)}:(a,(i, j)) \in P\right\}$ be a set of weights from $A$, with $w_a^{(i, j)}=0$ if $(a,(i, j)) \notin P$. For a composition $L=\left(L_1, \ldots, L_l\right)$ and $i, j \in[m]$, let $w^{(i, j)}(L)=w_{L_1}^{e_1} \cdots w_{L_l}^{e_l}$ if there exists $\alpha=\left(L_1, e_1\right) \cdots\left(L_l, e_l\right) \in \overrightarrow{P^*}$ such that $E(\alpha)=(i, j)$ and $L=\rho(\alpha)$, and let $w^{(i, j)}(L)=0$ otherwise. Let $v_n^{(i, j)}$ be defined by
    
\begin{align*}
      \left[\begin{array}{ccc}
      \sum\limits_{n \geq 0} v_n^{(1,1)} x^n & \cdots & \sum\limits_{n \geq 0} v_n^{(1, m)} x^n \\
      \vdots & \ddots & \vdots \\
      \sum\limits_{n \geq 0} v_n^{(m, 1)} x^n & \cdots & \sum\limits_{n \geq 0} v_n^{(m, m)} x^n
  \end{array}\right]=\left(I_m+\left[\begin{array}{ccc}
      \sum\limits_{k \geq 1} w_k^{(1,1)} x^k & \cdots & \sum\limits_{k \geq 1} w_k^{(1, m)} x^k \\
      \vdots & \ddots & \vdots \\
      \sum\limits_{k \geq 1} w_k^{(m, 1)} x^k & \cdots & \sum\limits_{k \geq 1} w_k^{(m, m)} x^k
  \end{array}\right]\right)^{-1},
\end{align*}

  where $I_m$ denotes the $m \times m$ identity matrix. Then
    
  \begin{align}
      \left[\begin{array}{ccc}\label{nonComGen}
      \sum\limits_L w^{(1,1)}(L) \mathbf{r}_L & \cdots & \sum\limits_L w^{(1, m)}(L) \mathbf{r}_L \\
      \vdots & \ddots & \vdots \\
      \sum\limits_L w^{(m, 1)}(L) \mathbf{r}_L & \cdots & \sum\limits_L w^{(m, m)}(L) \mathbf{r}_L
  \end{array}\right]=\left[\begin{array}{ccc}
      \sum\limits_{n \geq 0} v_n^{(1,1)} \mathbf{h}_n & \cdots & \sum\limits_{n \geq 0} v_n^{(1, m)} \mathbf{h}_n \\
      \vdots & \ddots & \vdots \\
      \sum\limits_{n \geq 0} v_n^{(m, 1)} \mathbf{h}_n & \cdots & \sum\limits_{n \geq 0} v_n^{(m, m)} \mathbf{h}_n
  \end{array}\right]^{-1},
  \end{align}

  where each sum in the matrix on the left-hand side is over all compositions $L$.
\end{theorem}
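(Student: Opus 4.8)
The plan is to prove the purely algebraic identity~\eqref{nonComGen} in the ring of $m\times m$ matrices over a suitable completion of $A\otimes\mathbf{Sym}$; the generating-function statement the paper actually invokes is then obtained by applying $\Phi$ entrywise, which converts the $\mathbf{r}_L$ and $\mathbf{h}_n$ into the appropriate power series by Lemma~\eqref{homomorphsim}, and by noting that $\Phi$, being a ring homomorphism, commutes with matrix inversion here because both matrices have constant term $I_m$. Throughout, one keeps in mind that the weights $w_a^{(i,j)}\in A$ commute with the $\mathbf{h}_n$, whereas matrix multiplication records left-to-right order. First I would rewrite the left-hand side $\mathcal{R}:=\bigl[\sum_L w^{(i,j)}(L)\,\mathbf{r}_L\bigr]_{i,j}$ as a sum over walks: the run-network property says that for each composition $L$ and each pair $(i,j)$ there is at most one $\alpha\in\overrightarrow{P^*}$ with $E(\alpha)=(i,j)$ and $\rho(\alpha)=L$, so $\mathcal{R}_{ij}=\sum_{\alpha}\bigl(\prod_s w_{L_s}^{e_s}\bigr)\mathbf{r}_{\rho(\alpha)}$, summed over all walks $\alpha=(L_1,e_1)\cdots(L_{\ell},e_{\ell})$ from $i$ to $j$ (here $\ell=\ell(\alpha)=\#\rho(\alpha)$ is the number of edges), the empty walk contributing $\delta_{ij}$.

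Next I would expand each $\mathbf{r}_{\rho(\alpha)}$ by inclusion--exclusion, $\mathbf{r}_{\rho(\alpha)}=\sum_{K\le\rho(\alpha)}(-1)^{\#\rho(\alpha)-\#K}\mathbf{h}_K$, and note that a coarsening $K\le\rho(\alpha)$ is precisely a way of cutting $\alpha$ into consecutive nonempty sub-walks $\alpha_1,\dots,\alpha_k$, with $K_t$ equal to the sum of the labels along $\alpha_t$; the associated summand is then $\bigl(\prod_s w_{L_s}^{e_s}\bigr)(-1)^{\ell(\alpha)-k}\,\mathbf{h}_{K_1}\cdots\mathbf{h}_{K_k}$. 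Re-summing first over the cut points and then over the constituent sub-walks with matching endpoints yields $\mathcal{R}=\sum_{k\ge0}\mathcal{M}^{k}=(I_m-\mathcal{M})^{-1}$, where
\[
\mathcal{M}_{ab}:=\sum_{\substack{\beta:\,a\to b\\ \ell(\beta)\ge1}}\Bigl(\prod_s w_{L_s}^{e_s}\Bigr)(-1)^{\ell(\beta)-1}\,\mathbf{h}_{L_1+\cdots+L_{\ell(\beta)}}.
\]
To finish, I would identify $I_m-\mathcal{M}$ with $\bigl[\sum_{n\ge0}v_n^{(i,j)}\mathbf{h}_n\bigr]$. Setting $W(x)=\bigl[\sum_{k\ge1}w_k^{(i,j)}x^k\bigr]$, the defining relation $\bigl[\sum_n v_n^{(i,j)}x^n\bigr]=(I_m+W(x))^{-1}=\sum_{p\ge0}(-W(x))^{p}$ gives $v_n^{(i,j)}=\sum(-1)^{\ell(\beta)}\prod_s w_{L_s}^{e_s}$, the sum over all walks $\beta:i\to j$ whose labels sum to $n$; in particular $v_0^{(i,j)}=\delta_{ij}$, and every such walk with $n\ge1$ has $\ell(\beta)\ge1$. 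Comparing with the displayed formula shows that the coefficient of $\mathbf{h}_n$ in $\mathcal{M}_{ab}$ is $-v_n^{(a,b)}$ for $n\ge1$; since $\mathbf{h}_0=1$, this gives $(I_m-\mathcal{M})_{ab}=\sum_{n\ge0}v_n^{(a,b)}\mathbf{h}_n$, hence $\mathcal{R}=\bigl[\sum_{n\ge0}v_n^{(i,j)}\mathbf{h}_n\bigr]^{-1}$, which is exactly~\eqref{nonComGen}.

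The step I expect to be the main obstacle is the regrouping in the second paragraph: one must make the correspondence between a pair (walk, coarsening of its label composition) and a tuple of consecutive nonempty sub-walks precise enough to verify that $\mathcal{M}^{k}$ produces the noncommuting factors $\mathbf{h}_{K_1},\dots,\mathbf{h}_{K_k}$, together with the $A$-valued weights along the walk, in exactly the left-to-right order in which they occur in $\mathbf{h}_{K_1}\cdots\mathbf{h}_{K_k}$ and along $\alpha$; this is where the definitions of $\rho$, $E$, and the run-network condition do their work. Once those orders are reconciled, the remaining manipulations are routine formal-series algebra.
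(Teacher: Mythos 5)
Your argument is correct, but note that the paper does not actually prove this statement: Theorem~\ref{thm:gen run thm} is quoted verbatim from Zhuang~\cite{zhu17} and used as a black box, so there is no internal proof to compare against. Your reconstruction is a legitimate, self-contained derivation and is essentially the standard transfer-matrix argument (close in spirit to Zhuang's original): expand each $\mathbf{r}_{\rho(\alpha)}$ via the M\"obius relation $\mathbf{r}_L=\sum_{K\le L}(-1)^{\#L-\#K}\mathbf{h}_K$, identify coarsenings of $\rho(\alpha)$ with cuts of the walk $\alpha$ into consecutive nonempty sub-walks, and resum the resulting geometric series. The step you flagged as the main obstacle does go through: a pair (walk from $i$ to $j$, coarsening of its label composition into $k$ parts) corresponds bijectively to a $k$-tuple of nonempty sub-walks with matching endpoints, the sign $(-1)^{\ell(\alpha)-k}$ factors as $\prod_t(-1)^{\ell(\alpha_t)-1}$, and since the $A$-valued weights commute with the $\mathbf{h}_n$ while both the weights and the factors $\mathbf{h}_{K_1},\dots,\mathbf{h}_{K_k}$ occur in walk order, the product of the $k$ matrix entries reproduces exactly $\bigl(\prod_s w_{L_s}^{e_s}\bigr)(-1)^{\ell(\alpha)-k}\mathbf{h}_{K_1}\cdots\mathbf{h}_{K_k}$. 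The run-network condition is used precisely where you invoke it (well-definedness of $w^{(i,j)}(L)$ and the bijection between realizable compositions and walks), the grading by total label sum makes all rearrangements finite in each degree, and the identification of $I_m-\mathcal{M}$ with $\bigl[\sum_{n\ge0}v_n^{(i,j)}\mathbf{h}_n\bigr]$ via $(I_m+W(x))^{-1}=\sum_{p\ge0}(-W(x))^p$ is exactly right. The only cosmetic remark is that your opening sentence about applying $\Phi$ concerns the paper's use of the theorem rather than the theorem itself, which is a purely noncommutative-symmetric-function identity.
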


\section*{acknowledgement}
We are grateful to Quan Yuan for her help in finding the $\des$-Wilf-equivalence classifications
for quasi-consecutive patterns of length $4$.


\begin{thebibliography}{99}

    \bibitem{BP12} A.~M.~Baxter and L.~K.~Pudwell, Enumeration schemes for vincular patterns, {\em Discrete Math.} {\bf312} (2012), 1699--1712.

    \bibitem{BS15} A.~M.~Baxter and M.~Shattuck, Some Wilf-equivalences for vincular patterns, {\em J. Comb.} {\bf6} (2015), 19--45.

    \bibitem{BDGZ19}C. Bielawa, R. Davis, D. Greeson and Q. Zhou, Descents and des-Wilf equivalence of permutations avoiding certain nonclassical patterns, {\em Involve} \textbf{12} (2019), 549--563. 

    \bibitem{Bukata2019}
    M. Bukata, R. Kulwicki, N. Lewandowski, L. Pudwell, J. Roth, T.	Wheeland. Distributions of statistics over pattern-avoiding permutations. {\em J. Integer Sequences} {\bf 22} (2019) Article 19.2.6.

    \bibitem{CN18} E.~Chen and S.~Narayanan, The 26 Wilf-equivalence classes of length five quasi-consecutive patterns, {\em Discrete Math. Theor. Comput. Sci.} {\bf20} (2018), \#12.
    
    \bibitem{cla01}A. Claesson, Generalized pattern avoidance, {\em European J. Combin.} \textbf{22} (2001), 961--971.

    \bibitem{eli06} S.~Elizalde, Asymptotic enumeration of permutations avoiding generalized patterns, {\em Adv. Appl. Math.} {\bf36} (2006), 138--155.
    
    \bibitem{eliz16} S.~Elizalde, A survey of consecutive patterns in permutations, {\em IMA Vol. Math. Appl.} {\bf 159}, Springer, [Cham], 2016. 

    \bibitem{FS70} D.~Foata and M.-P.~Sch\"utzenberger, {\it Th\'eorie g\'eom\'etrique des polyn\^omes eul\'eriens}, Lecture Notes in Mathematics, {\bf138}, Berlin, Springer (1970).

    \bibitem{FTHZ19} S. Fu, D. Tang, B. Han and J. Zeng, $(q,t)$-Catalan numbers: gamma expansions, pattern avoidances, and the $(-1)$-phenomenon, {\em Adv. Appl. Math.} \textbf{106} (2019), 57--95.

   \bibitem{GKLLRT95}I.M. Gelfand, D. Krob, A. Lascoux, B. Leclerc, V.S. Retakh, and J.Y. Thibon, Noncommutative symmetrical functions, {\em Adv. Math.} \textbf{112} (1995), 218--348.

\bibitem{HK} T. Han and S. Kitaev, Joint distributions of statistics over permutations avoiding two patterns of length 3, {\em Discr. Math. and Theor. Comput. Sci.} {\bf 26:1}, Permutation Patterns 2023, (2024),  \#4 .

    \bibitem{kit05} S.~Kitaev, Partially ordered generalized patterns, {\em Discr. Math.} {\bf298} (2005), 212--229.
    
    \bibitem{kit11} S. Kitaev, {\it Patterns in Permutations and Words}, Springer Berlin, Heidelberg (2011).
    
    \bibitem{knu75} D. E. Knuth, {\it The art of computer programming, vol. 1, Fundamental Algorithms, 2nd edition}, Addison-Wesley Series in Computer Science and Information Processing, Addison-Wesley (1975).  
    
    \bibitem{mac60} P. A. MacMahon, {\it Combinatory analysis}, Chelsea, New York (1960).

    \bibitem{MR06} A. Mendes and J. Remmel, Permutations and words counted by consecutive patterns, {\em Adv. Appl. Mathem.} {\bf37} (2006), 443--480.
    
    \bibitem{pet15}T. K. Petersen, {\it Eulerian Numbers}, Birkh\"auser, New York (2015).

    \bibitem{rei95} V.~Reiner, The distribution of descents and length in a Coxeter group, {\em Electron. J. Combin.} {\bf2} (1995), R25.

    \bibitem{rio58} J.~Riordan, {\it An introduction to combinatorial analysis}, New York, Wiley (1958).

    \bibitem{sag91} B. E.~Sagan, {\it The symmetric group: representations, combinatorial algorithms, and symmetric functions}, Wadsworth (1991).

    \bibitem{SS85} R. Simion and F. W. Schmidt, Restricted permutations, {\em European J. Combin.} \textbf{6} (1985), 383--406.
    
    \bibitem{sta11}R. Stanley, { \it Enumerative Combinatorics}, Vol. 1, 2nd edition, Cambridge Studies in Advanced Mathematics, Cambridge University Press (2011). 
    
    \bibitem{zhu16}Y. Zhuang, Counting permutations by runs, {\em J. Combin. Theory Ser. A} \textbf{142} (2016), 147--176.
    
   \bibitem{zhu17}Y. Zhuang, Eulerian polynomials and descent statistics, {\em Adv. Appl. Math.} \textbf{90} (2017), 86--144.
    
\end{thebibliography}
\end{document}